\numberwithin{equation}{section}
\newcommand\h[1]{\mkern2mu\widehat{\mkern-2mu#1}}
\newcommand\on[1]{|\mkern-1mu|\mkern-1mu|#1|\mkern-1mu|\mkern-1mu|}
\renewcommand\Re{\operatorname{Re}}
\newcommand\R{\mathbb{R}}
\newcommand\N{\mathbb{N}}
\newcommand\Sp{\mathbb{S}}
\newcommand\cL{\mathcal{L}}
\newtheorem{Thm}{Theorem}[section]
\newtheorem{Lemma}[Thm]{Lemma}
\newtheorem{Cor}[Thm]{Corollary}
\newtheorem{Prop}[Thm]{Proposition}
\theoremstyle{remark}
\newtheorem{Rem}{Remark}[section]
\begin{document}

\title[Pitt inequalities and restriction theorems]{Pitt inequalities and
restriction theorems for~the~Fourier transform}
\author{Laura De Carli}
\address{L.~De Carli, Florida International University,
Department of Mathematics,
Miami, FL 33199, USA}
\email{decarlil@fiu.edu}
\author{Dmitriy~Gorbachev}
\address{D.~Gorbachev, Tula State University,
Department of Applied Mathematics and Computer Science,
300012 Tula, Russia}
\email{dvgmail@mail.ru}
\author{Sergey~Tikhonov}
\address{S. Tikhonov, ICREA, Centre de Recerca Matem\`{a}tica, and UAB\\
Campus de Bellaterra, Edifici~C
08193 Bellaterra (Barcelona), Spain.}
\email{stikhonov@crm.cat}
 \subjclass[2010]{
 Primary:
 42B10
 Secondary classification:
42C20, 46E30}
\keywords{Pitt inequalities, restriction theorems, weights, Riemann-Lebesgue
estimate, uncertainty principle}
\thanks{D.~G. was supported by the RFBR (no.~13-01-00045), the Ministry of
Education and Science of the Russian Federation (no.~5414GZ), and by D.~Zimin's
Dynasty Foundation. S.~T. was partially supported by grant 2014-SGR-289 from
AGAUR (Generalitat de Catalunya) and RFBR 13-01-00043.}
\begin{abstract}
We prove  new  
Pitt inequalities 
for the Fourier transforms
with radial and non-radial weights
using weighted restriction inequalities  for the Fourier transform on the sphere.  
We also prove new Riemann--Lebesgue estimates and versions of the uncertainty
principle for the Fourier transform.
\end{abstract}
\maketitle

\section{Introduction}

Weighted inequalities for the Fourier transform provide a natural balance
between functional growth and smoothness. On $\R^n$ it is important to
determine quantitative comparisons between the relative size of a function and
its Fourier transform at infinity. We will let $\h{f}(\xi) =\int_{\R^n}
e^{ix\xi}f(x)\,dx$, $\xi\in \R^{n}$, be the Fourier transform in $L^1(\R^n)$,
and $\|{\,\cdot\,}\|_p$ be the standard norm in $L^p(\R^n)$. We consider Pitt
type inequalities
\begin{equation}\label{1e-Genpitt}
\|u^{\frac{1}{q}} \h{f}\|_q\le C\|v^{\frac{1}{p}} f\|_p, \quad f\in
C^\infty_0(\R^n).
\end{equation}

Here and throughout the paper, $u$ and $v$ are non-negative measurable
functions on $\R^n$, and $1\le p, q\le \infty$ unless otherwise specified. We
will use~$C$ to denote numeric constants that may change from line to line. We
will let $p'=\frac{p}{p-1}$ be the conjugate exponent of $1\le p\le \infty$,
and we will often let $x= \rho\omega$, with $\omega\in \Sp^{n-1}$ and
$\rho=|x|$. We denote by $|E|$ the Lebesgue measure of a set $E$ and by
$\chi_E(x)$ be the characteristic function of $E$.

In 1983, Heinig \cite{H}, Jurkat--Sampson \cite{JS} and Muckenhoupt \cite{M},
\cite{M1} proved
\begin{Thm}\label{heinig} { Let $n\ge 1$.}
If the weights $u$ and $v$ satisfy
\begin{equation}\label{1e-weight-cond}
\sup_{s>0}\left(\int_{0}^s u^*(t )\,dt\right)^{\frac{1}{q}}
\left(\int_{0}^{\frac{1}{s}} [(1/v)^*(t)]^{\frac{1}{p-1}}\,dt\right)^{\frac{1}{p'}}=C<\infty,
\end{equation}
for $1<p\le q<\infty$, where $g^*$ is the non-increasing rearrangement of $g$,
then \eqref{1e-Genpitt} holds.
\end{Thm}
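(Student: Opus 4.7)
\medskip\noindent\textbf{Proof plan.} The plan is to reduce \eqref{1e-Genpitt} to a one-dimensional weighted Hardy inequality on $(0,\infty)$ whose Muckenhoupt-type characterization is exactly \eqref{1e-weight-cond}. First, I would pass to decreasing rearrangements. The Hardy--Littlewood rearrangement inequality applied to $u$ and $|\h{f}|^q$ gives
\[
\int_{\R^n} u|\h{f}|^q \le \int_0^\infty u^*(t)\bigl((\h{f})^*(t)\bigr)^q\,dt,
\]
while a layer-cake argument in the reverse direction (for each level set $\{|f|^p>\lambda\}$, the integral $\int_{\{|f|^p>\lambda\}} v\,dx$ is minimized by placing the level set where $v$ is smallest, giving the increasing rearrangement $v_*(t)=1/(1/v)^*(t)$) yields
\[
\int_{\R^n} v|f|^p \ge \int_0^\infty \frac{(f^*(t))^p}{(1/v)^*(t)}\,dt.
\]

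Second, I would invoke the Jodeit--Torchinsky pointwise rearrangement estimate $(\h{f})^*(t)\le C\int_0^{1/t} f^*(s)\,ds$, which together with Step~1 reduces \eqref{1e-Genpitt} to the one-dimensional inequality
\[
\Biggl(\int_0^\infty u^*(t)\Bigl(\int_0^{1/t} f^*(s)\,ds\Bigr)^q dt\Biggr)^{1/q} \le C \Biggl(\int_0^\infty \frac{(f^*(t))^p}{(1/v)^*(t)}\,dt\Biggr)^{1/p}.
\]
After the substitution $t\mapsto 1/t$ in the outer integral on the left, this is a standard weighted Hardy inequality for the operator $g\mapsto\int_0^\tau g(s)\,ds$ with weights $\tilde U(\tau) = u^*(1/\tau)\tau^{-2}$ and $V(t) = 1/(1/v)^*(t)$. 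Muckenhoupt's classical characterization (valid for $1<p\le q<\infty$) requires $\sup_{s>0}\bigl(\int_s^\infty \tilde U\bigr)^{1/q}\bigl(\int_0^s V^{1-p'}\bigr)^{1/p'}<\infty$; unraveling via $\int_s^\infty \tilde U = \int_0^{1/s} u^*$ and $V^{1-p'} = ((1/v)^*)^{1/(p-1)}$ recovers precisely \eqref{1e-weight-cond} after the harmless substitution $s\mapsto 1/s$.

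The nontrivial analytic ingredient is the Jodeit--Torchinsky Fourier rearrangement bound, which I expect to be the main obstacle: it is proved by splitting $f = f\chi_{\{|f|>\lambda\}} + f\chi_{\{|f|\le\lambda\}}$, applying Hausdorff--Young to each piece with different exponents, and optimizing the threshold $\lambda$. The reverse rearrangement bound for $v|f|^p$ is elementary but requires care to get the direction of inequality right; the one-dimensional Hardy step is exactly Muckenhoupt's 1972 theorem and is available off-the-shelf.
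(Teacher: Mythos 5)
You should first note that the paper offers no proof of Theorem \ref{heinig} to compare against: it is quoted as a known result of Heinig \cite{H}, Jurkat--Sampson \cite{JS} and Muckenhoupt \cite{M,M1}, so your proposal has to be measured against those classical proofs. Your overall architecture (rearrange both sides, reduce to a one-dimensional Hardy inequality, invoke Muckenhoupt's characterization and unravel it to \eqref{1e-weight-cond}) is indeed the route taken in that literature, and your Step 1 inequalities and the bookkeeping that turns the Hardy condition into \eqref{1e-weight-cond} are fine.

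The genuine gap is the linchpin of Step 2: the ``pointwise'' estimate $(\h{f})^*(t)\le C\int_0^{1/t}f^*(s)\,ds$ is false. Take $n=1$ and the chirp $f(x)=e^{iNx^2}\chi_{[0,1]}(x)$: then $f^*=\chi_{[0,1]}$, so $\int_0^{1/t}f^*=\min(1,1/t)$, while stationary phase gives $|\h{f}|\sim N^{-1/2}$ on an interval of length $\sim N$, hence $(\h{f})^*(t)\sim N^{-1/2}\gg 1/t$ for $t\sim N$. The splitting $f=f\chi_{\{|f|>\lambda\}}+f\chi_{\{|f|\le\lambda\}}$ with Hausdorff--Young/Plancherel that you sketch produces precisely the two-term Calder\'on bound
\[
(\h{f})^{**}(t)\le C\Bigl(\int_0^{1/t}f^*(s)\,ds+t^{-1/2}\Bigl(\int_{1/t}^\infty (f^*(s))^2\,ds\Bigr)^{1/2}\Bigr),
\]
and the second term cannot be dropped (the chirp saturates it); what Jodeit--Torchinsky actually provide is the integrated version $\int_0^t\bigl((\h{f})^*(s)\bigr)^2ds\le C\int_0^t\bigl(\int_0^{1/s}f^*(u)\,du\bigr)^2ds$. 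With either correct substitute your reduction is incomplete: you must in addition either show that condition \eqref{1e-weight-cond} (together with the monotonicity of $u^*$ and of $1/(1/v)^*$) controls the second, dual-Hardy term acting on decreasing $f^*$, or pass from the quadratic-average majorization to the weighted $L^q(u^*)$ norm via a Hardy--Littlewood--P\'olya type lemma --- the step where the cited proofs do real work, and which is delicate when $q<2$. Feeding \eqref{1e-weight-cond} only into the single Hardy inequality $g\mapsto\int_0^{1/t}g$ does not yield the theorem.
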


To formulate necessary conditions for inequality \eqref{1e-Genpitt} to hold we
recall the definition of polar set. If $A\subset \R^n$,
\[
A^{*}=\big\{\xi\in \R^{n}\colon |x\xi|\le 1,\ x\in A\big\}
\]
is the polar set of~$A$ (see {\cite[\S\:4]{St}}). We prove the following

\begin{Thm}\label{nec} Let $n\ge 1$.
Suppose that the Pitt inequality \eqref{1e-Genpitt} holds for any $f\in
C^\infty_0(\R^n)$ and for $1<p,q < \infty$.

\smallbreak
\textup{(1)} \ Let a convex body $A\subset \R^{n}$ be centrally symmetric with
respect to the origin. Then
\begin{equation}\label{nec-cond}
\sup_{A}\left(
\int_{c A^{*}}u(\xi)\,d\xi\right)^{\frac 1 q}\left(\int_{A}v^{1-p'}(x)\,dx\right)^{\frac 1{p'}}=\
C<\infty,
\end{equation}
where $c <\pi/2$ and $A^{*}$ is a polar set of the set~$A$.

\smallbreak
\textup{(2)} \ Let the weights $u(x)=u_0(|x|)$ and $v(x)=v_0(|x|)$ be radial,
then
\begin{equation}\label{1e-weight-cond-2}
\sup_{s>0}\left(\int_{|x|<s} u(x)\,dx\right)^{\frac 1 q}
\left(
\int_{|x|<\frac{c_n}{s}}v^{1-p'}(x)\,dx\right)^{\frac 1{p'}}=C<\infty,
\end{equation}
where $c_n$ is any positive number less than $q_{n/2-1}$, the first zero of
the Bessel function $J_{n/2-1}(t)$. In particular, $q_{n/2-1}\ge \pi/2$.

\smallbreak
\textup{(3)} Results of the part \textup{(1)} also hold if one replaces the
sets $A$ and $c A^{*}$ by a union of their disjoint translations, that is, by
the sets $A_1=\bigcup_{j=1}^{N_1} (A+x_j)$ and
$A_2=\bigcup_{j=1}^{N_2} (cA^*+\xi_j)$ for any $x_j$ and $\xi_j$.
\end{Thm}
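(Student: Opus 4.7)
The approach is the classical \emph{test function method}: for each admissible $A$, build $f\in C_0^\infty(\R^n)$ essentially supported on $A$ whose Fourier transform $\h{f}$ admits a uniform lower bound on $cA^*$, then substitute into \eqref{1e-Genpitt} and rearrange. The optimal choice (dictated by H\"older's inequality applied to $\|f\|_1/\|v^{1/p}f\|_p$) is $f=v^{1-p'}\chi_A$, after truncating $v^{1-p'}\mapsto \min(v^{1-p'},R)$ and mollifying so $f\in C_0^\infty$, then passing to the limit. For part~(1): writing $\Re\h{f}(\xi)=\int_A\cos(x\xi)\,v(x)^{1-p'}\,dx$, the definition of $cA^*$ gives $|x\xi|\le c<\pi/2$ for every $x\in A$, hence $\cos(x\xi)\ge \cos c>0$, so
\[
|\h{f}(\xi)|\ge \cos c\int_A v^{1-p'}=\cos c\cdot\|f\|_1,\qquad \xi\in cA^*.
\]
The identity $1+p(1-p')=1-p'$ yields $\|v^{1/p}f\|_p^p=\|f\|_1$, and substituting both bounds into \eqref{1e-Genpitt} produces $\cos c\cdot\|f\|_1^{1/p'}(\int_{cA^*}u)^{1/q}\le C$, which is \eqref{nec-cond} after absorbing $(\cos c)^{-1}$ into the constant.

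For part~(2), specializing~(1) to $A=B_s$ (so $A^*=B_{1/s}$) already yields \eqref{1e-weight-cond-2} for every $c_n<\pi/2$. To raise $c_n$ up to $q_{n/2-1}$ one exploits the Hankel representation for the same radial test function:
\[
\h{f}(\xi)=\omega_{n-1}\int_0^s f_0(\rho)\,j_{n/2-1}(\rho|\xi|)\,\rho^{n-1}\,d\rho,
\]
where $j_\alpha(t)=\Gamma(\alpha+1)(2/t)^\alpha J_\alpha(t)$ is normalized by $j_\alpha(0)=1$. The standard Bessel identity gives $j_\alpha'(t)=-\frac{t}{2(\alpha+1)}\,j_{\alpha+1}(t)$, and combined with the interlacing $q_{\alpha+1}>q_\alpha$ of Bessel zeros this shows that $j_{n/2-1}$ is positive and strictly decreasing on $[0,q_{n/2-1}]$. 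Hence for $|\xi|\le c_n/s$ with $c_n<q_{n/2-1}$ we have $j_{n/2-1}(\rho|\xi|)\ge j_{n/2-1}(c_n)>0$ throughout $\rho\le s$, so $\h{f}(\xi)\ge j_{n/2-1}(c_n)\|f\|_1$ on $B_{c_n/s}$, and the same book-keeping as in~(1) produces \eqref{1e-weight-cond-2}. The bound $q_{n/2-1}\ge \pi/2$ then falls out of comparing the two admissible ranges.

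For part~(3), we modulate the test function: for a single pair $(j_0,k_0)$ take $f(x)=e^{-i\xi_{k_0}x}\chi_{A+x_{j_0}}(x)v(x)^{1-p'}$. Substituting $y=x-x_{j_0}\in A$ in the Fourier integral converts the argument of~(1) into $|\h{f}(\eta)|\ge \cos c\int_{A+x_{j_0}}v^{1-p'}$ on $cA^*+\xi_{k_0}$, giving the single-block bound on $\bigl(\int_{A+x_{j_0}}v^{1-p'}\bigr)^{1/p'}\bigl(\int_{cA^*+\xi_{k_0}}u\bigr)^{1/q}$. The summed version is then obtained by superposing these modulated blocks, using disjointness of the $A+x_j$ to keep the $L^p$ side additive and choosing phases/signs so that the contributions to $\int_{A_2}u|\h{f}|^q$ remain coherent.

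\textbf{Anticipated obstacle.} Parts~(1) and~(2) are routine once one fixes the test function; the only non-trivial input in~(2) is the Bessel-kernel monotonicity. The substantive difficulty lies in part~(3): individual-pair applications of \eqref{1e-Genpitt} only control $\max_{j,k}a_j^{1/p'}b_k^{1/q}$ with $a_j=\int_{A+x_j}v^{1-p'}$ and $b_k=\int_{cA^*+\xi_k}u$, whereas the theorem demands the sum version $(\sum_j a_j)^{1/p'}(\sum_k b_k)^{1/q}\le C$. Constructing a single test function whose Fourier transform is simultaneously large on all of $A_2$ --- the step that upgrades maximum to sum --- is the genuinely delicate point.
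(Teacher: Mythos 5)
Your treatment of parts (1) and (2) is essentially the paper's own proof: the same test function $f=\chi_A v^{1-p'}$ (the paper does not even bother with the truncation/mollification you add, which is a legitimate small improvement since \eqref{1e-Genpitt} is only assumed on $C^\infty_0$), the same bound $|\h{f}(\xi)|\ge \cos c\int_A v^{1-p'}$ on $cA^*$, the same bookkeeping via $\|v^{1/p}f\|_p^p=\|f\|_1$, and in (2) the same Hankel representation with the lower bound $j_{n/2-1}(\rho|\xi|)\ge j_{n/2-1}(c_n)$; your derivative-plus-interlacing justification of the monotonicity of $j_{n/2-1}$ on $[0,q_{n/2-1}]$ supplies a step the paper only asserts. (Minor point: $q_{n/2-1}\ge\pi/2$ follows from monotonicity of the first zero $q_\alpha$ in $\alpha$ together with $q_{-1/2}=\pi/2$, not from ``comparing admissible ranges.'')

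For part (3) the situation is different, and your ``anticipated obstacle'' is the key issue — but the resolution is not the coherent-superposition construction you sketch. The paper's entire argument for (3) is exactly your single-pair step: pass to $g(x)=f(x-x_0)e^{-ix\xi_0}$, which gives \eqref{nec-cond} for each translated pair $(A+x_j,\,cA^*+\xi_k)$, generalizing Berndt's condition \eqref{1e-Necessarycondition}; it contains no max-to-sum upgrade. Moreover, the strengthened summed statement you are trying to reach, with a constant uniform in $N_1,N_2$, is simply not a necessary condition, so no choice of phases or signs can produce it. Take $u\equiv v\equiv 1$, $p=q=2$, so \eqref{1e-Genpitt} is Plancherel; then $\int_{A_1}v^{1-p'}=N_1|A|$ and $\int_{A_2}u=N_2\,c^n|A^*|$, and although $|A|\,|A^*|$ is bounded (Santal\'o), the product $\bigl(N_2c^n|A^*|\bigr)^{1/2}\bigl(N_1|A|\bigr)^{1/2}$ blows up as $N_1N_2\to\infty$. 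Structurally, any test-function proof would require $|\h{f}|\gtrsim\|f\|_1$ on all of $A_2$, and Plancherel forces $\|f\|_1^2\,|A_2|\lesssim\|\h{f}\|_2^2\lesssim\|f\|_1\|f\|_\infty$, which is impossible once $|A_2|$ is large. So you should read (3) — as the paper's proof in effect does — as the translation-invariant family of single-pair conditions (equivalently, a union statement whose constant is allowed to grow like $N_1^{1/p'}N_2^{1/q}$, which follows trivially from the single-pair bound); with that reading your modulation argument already completes the proof, and the ``genuinely delicate point'' you flag should be abandoned rather than attacked.
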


Note that in this theorem we do not assume $q \ge p$.

Part (2) of the theorem is known with a smaller constant $c$; see the
proof of Theorem 3.1 in \cite{H}. Moreover, part (3) generalizes the following
necessary condition (see \cite[Th. 3]{berndt}):
\begin{equation}\label{1e-Necessarycondition}
\left(\int_{Q_1}u(\xi)\,d\xi\right)^{\frac 1 q}\left(\int_{Q_2}v^{1-p'}(x)\,dx\right)^{\frac{1}{p'}}=
C<\infty,
\end{equation}
for all cubes $Q_1$ and $Q_2$ such that $|Q_1|\,|Q_2|=1$.

We should also mention \cite[Theorem 2.1]{L} where a necessary condition similar to
\eqref{1e-Necessarycondition}, with $u$ replaced by a measure $d\mu$, was proved.

\medskip When $u(x)=u_0(|x|)$ and $v(x)=v_0(|x|)$ are radial, with $u_0(\cdot)$
non-increasing and $v_0(\cdot)$ non-decreasing, then \eqref{1e-weight-cond-2}
is necessary and sufficient for the validity of \eqref{1e-Genpitt} (see
\cite{H}). In particular, if $u(x)$ and $v(x)$ are locally integrable power
weights, i.e., in the form of $u= |x|^b$ and $v=|x|^a$, with $a,b>-n$, we get
that the classical Pitt inequality
\begin{equation}\label{1e-pitt}
\left(\int_{\mathbb{R}^n}
|\h{f}(\xi)|^q |\xi|^b\,d\xi\right)^{\frac 1 q}\le C
\left(\int_{\mathbb{R}^n}
|f(x)|^p|x|^a\,dx\right)^{\frac 1 p},
\end{equation}
where $f\in C^\infty_0(\R^n)$,
holds if and only if
\begin{equation}\label{1e-relation-ab}
  \frac{a}{p}+\frac{b}{q} = n\left(1-\frac{1}{p}-\frac{1}{q}\right),
\end{equation}
\begin{equation}\label{1e-relation-b}
-n<b\le 0, \quad\text{and}\quad 0\le a<n(p-1);
\end{equation}
see \cite{Pitt,Stein,BH}.

\medbreak
Pitt type inequalities with power weights that satisfy less restrictive
conditions than those in \eqref{1e-relation-b} are only valid on special
subspaces of $L^p(\R^n)$. We have proved in \cite{DGT} that if $f$ is a product
of a radial function and a spherical harmonics of degree $k\ge 0$, then
\eqref{1e-Genpitt} is satisfied with $u=|x|^a$ and $v=|x|^b$ if and only if $a$
and~$b$ satisfy \eqref{1e-relation-ab} and
\begin{equation}\label{1e-relation-bb}
(n-1)\left(\frac{1}{2}-\frac{1}{p}\right) +\max\left\{ \frac{1}{p'}- \frac{1}{q}, \,
0\right\}\le \frac{b}{p} < \frac{n}{p'}+k,
\tag{\theequation$'$}
\end{equation}
{
which is less restrictive than the conditions in \eqref{1e-relation-b} even for $k= 0$ }

\medbreak
In this paper we prove $L^p$--$L^q$ Pitt inequalities for radial and non-radial
weights $u$ and $v$. Our main tools are weighted restriction inequalities for
the Fourier transform in $\R^n$, $n\ge 2$. That is,
\begin{equation}\label{1e-wRestrIneq}
\left(\int_{\Sp^{n-1}} |\h{f}(\omega)|^q
U(\omega)\,d\sigma(\omega)\right)^{\frac{1}{q}}\le C\left(\int_{\R^n}| f(x)|^p
v(x)\,dx\right)^{\frac{1}{p}},
\end{equation}
where $U$ and $v$ are non-negative and measurable on $\Sp^{n-1}$ and
$\R^n$, respectively, and $f\in C^\infty_0(\R^n)$.

We recall several known
restriction theorems in Section~2. In Section~3 we present new Pitt inequalities using
restriction inequalities. In particular, we prove the following
\begin{Thm}\label{new}
Let $ 1\le p < \frac{2(n+2)}{n+4}$ and $1\le q\le \frac{n-1}{n+1}\,p'$, with $n\ge 2$.
Suppose that $u(x)=u_0(|x|)$ satisfies
\begin{equation}\label{1e-cond-nu}
\int_{0}^\infty \rho^{n-1-\frac{qn}{p'}} u_0(\rho)\,d\rho <\infty.
\end{equation}
Then for every $f\in C^\infty_0(\R^n)$
\begin{equation}\label{1e-NewPitt}
\|u^{\frac{1}{q}} \h{f}\|_q\le
C\|f\|_p.
\end{equation}
\end{Thm}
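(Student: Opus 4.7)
The plan is to reduce the Pitt inequality on $\R^{n}$ to a weighted restriction inequality on the sphere via polar coordinates and scaling. First I would write the left-hand side of \eqref{1e-NewPitt} in polar coordinates $\xi = \rho\omega$ with $\rho>0$ and $\omega\in\Sp^{n-1}$:
\begin{equation*}
\|u^{1/q}\h f\|_{q}^{q}
=\int_{0}^{\infty}\!\!\rho^{\,n-1}u_{0}(\rho)\!\int_{\Sp^{n-1}}\!|\h f(\rho\omega)|^{q}\,d\sigma(\omega)\,d\rho,
\end{equation*}
so that the sphere integral at radius $\rho$ is what needs to be controlled uniformly in $\rho$.

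Next I would use the standard scaling identity for the Fourier transform. Setting $f_{\rho}(y)=\rho^{-n}f(y/\rho)$ one has $\h{f_{\rho}}(\omega)=\h f(\rho\omega)$ and $\|f_{\rho}\|_{p}=\rho^{-n/p'}\|f\|_{p}$. Since $1\le p<\frac{2(n+2)}{n+4}$ and $1\le q\le \frac{n-1}{n+1}\,p'$, the Stein restriction theorem (recalled in Section~2) applied to $f_{\rho}$ yields
\begin{equation*}
\left(\int_{\Sp^{n-1}}|\h f(\rho\omega)|^{q}\,d\sigma(\omega)\right)^{1/q}
=\|\h{f_{\rho}}\|_{L^{q}(\Sp^{n-1})}
\le C\,\|f_{\rho}\|_{p}
=C\,\rho^{-n/p'}\|f\|_{p}.
\end{equation*}
Raising to the $q$-th power gives the pointwise-in-$\rho$ bound
$\int_{\Sp^{n-1}}|\h f(\rho\omega)|^{q}\,d\sigma(\omega)\le C\,\rho^{-qn/p'}\|f\|_{p}^{q}$.

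Inserting this estimate into the polar-coordinate expression above produces
\begin{equation*}
\|u^{1/q}\h f\|_{q}^{q}
\le C\,\|f\|_{p}^{q}\int_{0}^{\infty}\rho^{\,n-1-qn/p'}u_{0}(\rho)\,d\rho,
\end{equation*}
and the radial integral on the right is finite precisely by the hypothesis \eqref{1e-cond-nu}. Taking $q$-th roots yields \eqref{1e-NewPitt}, completing the argument.

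There is no real obstacle here: the argument is essentially the standard freezing-of-radii trick combined with the sharp form of Stein's restriction theorem. The only point requiring care is to ensure the exponent pair $(p,q)$ lies within the admissible region of that restriction inequality, which is exactly why the hypotheses on $p$ and $q$ take the stated form; the assumption \eqref{1e-cond-nu} then handles convergence at both $\rho\to 0$ and $\rho\to\infty$.
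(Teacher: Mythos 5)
Your argument is correct and is essentially the paper's own proof: the paper deduces Theorem \ref{new} from Theorem \ref{1T-Restr-Pitt} with $U\equiv v\equiv 1$, whose proof is exactly your dilation-plus-polar-coordinates computation (apply the restriction inequality to $f_\rho$, get the $\rho^{-qn/p'}$ decay, then integrate against $u_0(\rho)\rho^{n-1}\,d\rho$). The only caveat is attribution: on the full range $1\le p<\frac{2(n+2)}{n+4}$ the restriction inequality you invoke is the Fefferman--Tao theorem (Tomas--Stein only covers $1\le p\le\frac{2(n+1)}{n+3}$), which is how the paper states it.
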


\begin{Rem}
(\textit{i}) \ The proof of Theorem \ref{1T-Restr-Pitt}, of which Theorem
\ref{new} is a special case, shows that the constant~$C$ in \eqref{1e-NewPitt}
equals $C'\left(\int_{0}^\infty \rho^{n-1-\frac{qn}{p'}}
u_0(\rho)\,d\rho\right)^{{1}/{q}}$, where $C'$ depends on~$n,\ p,\ q$.
\\
(\textit{ii}) \ When $u\in L^p(\R^n)$ with $1\leq p\leq 2$ and $ q=1$, \eqref{1e-NewPitt} is
valid also when $u$ is not radial; indeed, by Hausforf--Young inequality,
\[
\|u \h{f}\|_ 1\le \|u \|_ p \|\h{f}\|_{p'}\leq
C\|f\|_p.
\]
(\textit{iii}) \ Theorem \ref{heinig} and most of the Pitt inequalities in the
literature are proved for $1<p\le q<\infty$. Theorem \ref{new} provides a
rather simple sufficient condition for (\ref{1e-NewPitt}) that applies
either when $p\le q$ or $p>q$.
Note that the known sufficient conditions 
for (\ref{1e-NewPitt}) 
are usually quite difficult to verify especially in the case $p>q$ (see for
example \cite{BH}).

\end{Rem}

Theorem \ref{new} applies in cases where Theorem \ref{heinig} does not: In
Section~4 we construct a radial weight $u$ for which the inequality
\eqref{1e-weight-cond} does not hold, but \eqref{1e-cond-nu} holds for $u_0$
and therefore \eqref{1e-NewPitt} is valid.

\medskip
The rest of the paper is organized as follows. In Section 5 we prove
necessary conditions for the Pitt inequality \eqref{1e-Genpitt} to hold
(Theorem\ref{heinig}), necessary conditions for the weighted restriction
inequality \eqref{1e-wRestrIneq} to hold (Proposition~\ref{P-nec1}), and
sufficient conditions from Section~3. These are the main results of the paper.

In Section~6 we prove new versions of the uncertainty principle for the Fourier
transform.

In Section~7 we apply our new Pitt's inequality to get a quantitative version
of the Riemann--Lebesgue lemma, which provides an interrelation between the
smoothness of a function and the growth properties of the Fourier transforms.

\medskip
Finally, we would like to mention make the following interesting observation which perhaps is
not new: the Pitt inequality \eqref{1e-Genpitt} holds if and only if, for some
$s\ge p$, we have $\|u^{\frac{1}{q}} \h{f}\|_q\le
C\|w^{-1}\|_{\frac{p}{s-p}}^{\frac{1}{s}}\|w^{\frac{1}{s}}v^{\frac{1}{p}} f\|_s $ whenever
$w^{-1}\in L^{\frac{p}{s-p}}(\R^n)$. In particular, the inequality $\|\h{f}\|_{p'}
\le C \|w^{\frac{1}{s}} f\|_s $ holds for every
$1\le p\le 2$ whenever $w^{-1}\in L^{\frac{p}{s-p}}(\R^n)$, $s\ge p$.
We will prove this fact
in Section 5.

\section{Restriction theorems for the Fourier transform}

The Tomas--Stein restriction inequality for the Fourier transform on the unit
sphere states that, for every $f\in C^\infty_0(\R^n)$, $n\ge 2$,
\begin{equation}\label{2eRestrIneq}
\left(\int_{\Sp^{n-1}} |\h{f}(\omega)|^q\,d\sigma(\omega)\right)^{\frac{1}{q}}
\le C\left(\int_{\R^n} |f(x)|^p\,dx\right)^{\frac{1}{p}},
\end{equation}
where $d\sigma(\omega)$ is the induced Lebesgue measure on $\Sp^{n-1}$,
$1\le q\le \frac{n-1}{n+1}\,p'$, and $1\le p\le \frac{2(n+1)}{n+3}$
\cite{T,St}.

The same inequality holds also if $d\sigma(\omega)$ is replaced by
$\chi(\omega)\,d\sigma(\omega)$, with $\chi\in C_0^\infty(\Sp^{n-1})$
\cite{St}. So, if $T(f)=\h{f}|_{\Sp^{n-1}}$ is the restriction operator, $T$
maps $L^p(dx)$ into $ L^q(d\sigma)$ boundedly when $p$, $q$ are as in the
Tomas--Stein theorem.

Note that \eqref{2eRestrIneq} is trivial when $p=1$ because
\[
\left(\int_{\Sp^{n-1}} |\h{f}(\omega)|^q\,d\sigma(\omega)\right)^{\frac{1}{q}}\le
\omega_{n-1}^{\frac{1}{q}}\|\h{f}\|_\infty\le \omega_{n-1}^{\frac{1}{q}}\|f\|_1,
\]
where $\omega_{n-1}=|\Sp^{n-1}|$.

The restriction conjecture states that
inequality \eqref{2eRestrIneq} is valid for all $1\le q\le
\frac{n-1}{n+1}\,p'$ and $1\le p < \frac{2n}{n+1}$. When $n=2$ the restriction
conjecture has been proved by C.~Fefferman \cite{F}. When $n\ge 3$, T.~Tao
\cite{Ta} has proved that~\eqref{2eRestrIneq} is valid for $1\le
p<\frac{2(n+2)}{n+4}$. Note that
$\frac{2(n+2)}{n+4}=\frac{2n}{n+1}$ when~$n=2$.

\medbreak
Weighted versions of the restriction inequality \eqref{2eRestrIneq} in the form
of
\begin{equation}\label{2eRestrIneq-w}
\left(\int_{\Sp^{n-1}}|\h{f}(\omega)|^q\,U(\omega)\,d\sigma(\omega)\right)^{\frac{1}{q}}\le
C\left(\int_{\R^n} |f(x)|^p v(x)\,dx\right)^{\frac{1}{p}}
\end{equation}
have been proved by several authors. In most of the theorems in the literature,
$1\le p\le q\le\infty$ and $U(\omega)$ is the restriction of a function
$\widetilde U(x)\in C^\infty (\R^{n})$, often with compact support.

\medskip
The following duality argument will be used in the proof of the theorems in the
next section. The technique is well known, but we state and prove Lemma
\ref{L-duality} in this paper for the sake of completeness.

\begin{Lemma}\label{L-duality}
Assume $U(x/|x|)=U(\omega)\in L^{1}(\Sp^{n-1})$. Inequality
\eqref{2eRestrIneq-w} is equivalent to
\begin{equation}\label{2eduality}
\left\|\int_{\Sp^{n-1}}g(\omega) e^{i\omega y}
U^{\frac{1}{q}}(\omega)\,d\sigma(\omega)\right\|_{L^{p'}(v^{1-p'}dy)}
\le
C \|g\|_{L^{q'}(\Sp^{n-1})}.
\end{equation}
\end{Lemma}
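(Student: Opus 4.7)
The plan is to establish the equivalence between \eqref{2eRestrIneq-w} and \eqref{2eduality} by a two-step bilinear duality argument: one step on the sphere and one on $\R^n$. First I would dualize the left-hand side of \eqref{2eRestrIneq-w} using the $L^q$--$L^{q'}$ pairing on $\Sp^{n-1}$, writing
\[
\|U^{1/q}\h{f}\|_{L^q(d\sigma)} = \sup_{\|g\|_{L^{q'}(d\sigma)}\le 1} \int_{\Sp^{n-1}} g(\omega)\,U^{1/q}(\omega)\,\h{f}(\omega)\,d\sigma(\omega).
\]
Then expand $\h{f}(\omega)=\int_{\R^n} e^{iy\omega}f(y)\,dy$ and interchange the order of integration by Fubini. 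The double integral rewrites as $\int_{\R^n} f(y)\,G(y)\,dy$, where
\[
G(y)=\int_{\Sp^{n-1}} g(\omega)\, e^{iy\omega}\, U^{1/q}(\omega)\,d\sigma(\omega)
\]
is precisely the function whose $L^{p'}(v^{1-p'}\,dy)$-norm appears on the left of \eqref{2eduality}.

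The second step is to dualize once more, this time against the weighted $L^p(v\,dx)$--$L^{p'}(v^{1-p'}\,dx)$ pairing. Substituting $F=v^{1/p}f$ and using the elementary identity $v^{-p'/p}=v^{1-p'}$ yields
\[
\sup_{\|v^{1/p}f\|_p\le 1}\left|\int_{\R^n} f(y)\,G(y)\,dy\right| = \|v^{-1/p}G\|_{L^{p'}(dy)} = \|G\|_{L^{p'}(v^{1-p'}\,dy)}.
\]
Chaining the two dualities shows that \eqref{2eRestrIneq-w} holds with constant $C$ if and only if $\|G\|_{L^{p'}(v^{1-p'}dy)}\le C\,\|g\|_{L^{q'}(d\sigma)}$ for every admissible $g$, which is exactly \eqref{2eduality}. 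In particular the best constants coincide.

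The argument is essentially bookkeeping, and the only technical point that requires care is the Fubini step. This is where the hypothesis $U\in L^1(\Sp^{n-1})$ enters: by H\"older on the sphere, $\|\,|g|\,U^{1/q}\|_{L^1(d\sigma)}\le \|g\|_{L^{q'}(d\sigma)}\,\|U\|_1^{1/q}$, and $|f|\in L^1(\R^n)$ since $f\in C^\infty_0(\R^n)$, so the iterated integral is absolutely convergent and Fubini applies. Beyond this verification and the standard observation that it suffices to test against a dense class of $g$, no substantial obstacle remains.
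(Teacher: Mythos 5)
Your proposal is correct and follows essentially the same route as the paper: the paper introduces the operator $Af(\omega)=\h{f}(\omega)U^{1/q}(\omega)$ and its adjoint $A^*g(x)=\int_{\Sp^{n-1}}g(\omega)e^{i\omega x}U^{1/q}(\omega)\,d\sigma(\omega)$, then chains the $L^q$--$L^{q'}$ duality on the sphere with H\"older/duality for the pairing $L^p(v\,dx)$--$L^{p'}(v^{1-p'}dx)$, which is exactly your two-step bilinear argument. Your explicit justification of the Fubini interchange via $U\in L^1(\Sp^{n-1})$ is a small additional care the paper leaves implicit, but the substance of the proof is the same.
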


\medskip

In Section 5 we prove necessary conditions for the weighted restriction
inequality \eqref{2eRestrIneq-w} to hold. To the best of our knowledge these
results are new.

\begin{Prop}\label{P-nec1}
Assume that the inequality \eqref{2eRestrIneq-w} holds with $U^{1-q'}
(\omega)\in L^{1 }(\Sp^{n-1})$. Then
\begin{equation}\label{v-j-nec-cond-1}
\int_{\R^n}v^{1-p'}(x)|j_{n/2-1}(|x|)|^{p'}\,dx<C,
\end{equation}
where $j_{\alpha}(t)=\Gamma(\alpha+1)(t/2)^{-\alpha}J_{\alpha}(t)$ is the
normalized Bessel function.
\end{Prop}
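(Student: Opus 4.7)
My plan is to combine the duality from Lemma \ref{L-duality} with a single carefully chosen test function. By that lemma, the hypothesized restriction inequality \eqref{2eRestrIneq-w} is equivalent to \eqref{2eduality}, so it suffices to exhibit a function $g\in L^{q'}(\Sp^{n-1})$ on which the left-hand side of \eqref{2eduality} computes to a multiple of $j_{n/2-1}(|y|)$.

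The natural choice is $g(\omega)=U^{-1/q}(\omega)$, which is engineered so that $g(\omega)U^{1/q}(\omega)\equiv 1$ inside the oscillatory integral. To see that this choice is admissible, I would note that since $-q'/q = 1-q'$, one has $\|g\|_{L^{q'}(\Sp^{n-1})}^{q'} = \int_{\Sp^{n-1}} U^{1-q'}(\omega)\,d\sigma(\omega)$, which is finite by hypothesis. Since $1-q'<0$, the integrability of $U^{1-q'}$ forces $U>0$ almost everywhere on $\Sp^{n-1}$, so $g$ is well-defined a.e.

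With this test function the left-hand side of \eqref{2eduality} reduces to $\bigl\|\int_{\Sp^{n-1}} e^{i\omega y}\,d\sigma(\omega)\bigr\|_{L^{p'}(v^{1-p'}dy)}$, and the classical identity $\int_{\Sp^{n-1}} e^{i\omega y}\,d\sigma(\omega)=\omega_{n-1}\,j_{n/2-1}(|y|)$ (the Fourier transform of the surface measure expressed via the normalized Bessel function) converts this into a constant times $\bigl(\int_{\R^n} v^{1-p'}(y)\,|j_{n/2-1}(|y|)|^{p'}\,dy\bigr)^{1/p'}$. Combining with the bound $C\|g\|_{L^{q'}(\Sp^{n-1})}$ from \eqref{2eduality} yields precisely the asserted estimate \eqref{v-j-nec-cond-1}, with a constant depending on $n$, $p$, $q$ and $\|U^{1-q'}\|_{L^1(\Sp^{n-1})}$.

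I do not anticipate any serious obstacle: the entire argument reduces to duality, one explicit test function, and a standard Bessel-type identity. The only subtlety worth flagging is checking that $g=U^{-1/q}$ is genuinely an $L^{q'}$ function, which is handled by the observation above that $U^{1-q'}\in L^1(\Sp^{n-1})$ forces both positivity of $U$ and the exact $L^{q'}$-norm computation.
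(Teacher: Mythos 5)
Your argument is correct and coincides with the paper's own proof: both apply Lemma \ref{L-duality}, test the dual inequality \eqref{2eduality} against $g=U^{-1/q}$ so that $A^*g(x)=\int_{\Sp^{n-1}}e^{i\omega x}\,d\sigma(\omega)=\omega_{n-1}j_{n/2-1}(|x|)$, and read off \eqref{v-j-nec-cond-1} with constant governed by $\|U^{1-q'}\|_{L^1(\Sp^{n-1})}$. Your explicit check that $\|g\|_{L^{q'}}^{q'}=\int_{\Sp^{n-1}}U^{1-q'}\,d\sigma<\infty$ only spells out what the paper dismisses with ``clearly.''
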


A~special case of \eqref{v-j-nec-cond-1} is in \cite[(3.1)]{BS}.
In particular, we obtain the following result.
\begin{Cor}\label{C-nec2}
Assume that the inequality \eqref{2eRestrIneq-w} holds with $U^{1-q'}
(\omega)\in L^{1 }(\Sp^{n-1})$; assume $v$ radial and non-negative, and that
$v(x)=v_0(|x|)$ satisfies either
\begin{equation}\label{vs1}
\int_{ A } v_{0}^{1-p'}(t-|A|)\,dt\le C\int_{A} v_{0}^{1-p'}(t)\,dt,
\end{equation}
or
\begin{equation}\label{vs2}
\int_{ A } v_{0}^{1-p'}(t+|A|)\,dt\le C\int_{A} v_{0}^{1-p'}(t)\,dt,
\end{equation}
 for all finite intervals $A$, with a constant $C$ independent of $A$.
Then
\[
\int_{\R^{n}}v^{1-p'}(x)(1+|x|)^{-\frac{p'(n-1)}2}\,dx<C.
\]
\end{Cor}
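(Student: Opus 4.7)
Proposition~\ref{P-nec1} together with a passage to polar coordinates gives
$$
I_0:=\int_0^\infty v_0^{1-p'}(\rho)\,|j_{n/2-1}(\rho)|^{p'}\rho^{n-1}\,d\rho<\infty,
$$
while the claim reduces to the finiteness of $I:=\int_0^\infty v_0^{1-p'}(\rho)(1+\rho)^{-p'(n-1)/2}\rho^{n-1}\,d\rho$. The only obstruction is that $|j_{n/2-1}|^{p'}$ vanishes at the zeros of the Bessel function while $(1+\rho)^{-p'(n-1)/2}$ does not; the translation estimate \eqref{vs1} or \eqref{vs2} on $v_0^{1-p'}$ will be used to transfer mass away from these bad windows onto neighboring intervals where $j_{n/2-1}$ is large.

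To set up the geometry, I would invoke the standard large-$\rho$ asymptotic
$$
j_{n/2-1}(\rho)=c_n\,\rho^{-(n-1)/2}\cos(\rho-\phi_n)+O\!\bigl(\rho^{-(n+1)/2}\bigr),
$$
which yields, for some threshold $\rho_0$, a good set $S=\{\rho\ge\rho_0:|\cos(\rho-\phi_n)|\ge 2^{-1/2}\}$ on which $|j_{n/2-1}(\rho)|^{p'}\ge c\,\rho^{-p'(n-1)/2}$. The complement of $S$ in $[\rho_0,\infty)$ is a $\pi$-periodic union of intervals of length $\ell=\pi/2$, each flanked by good intervals of the same length. The key step is to decompose $[\rho_0,\infty)$ into consecutive length-$\pi$ blocks so that in each block the bad half $B$ sits immediately to the \emph{left} of a good half $G$ with $|G|=|B|=\ell$ (assuming \eqref{vs1}; otherwise reverse the orientation and use \eqref{vs2} symmetrically). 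Since $G-\ell=B$, applying \eqref{vs1} with $A=G$ gives
$$
\int_B v_0^{1-p'}(s)\,ds=\int_G v_0^{1-p'}(t-\ell)\,dt\le C\int_G v_0^{1-p'}(t)\,dt,
$$
and since the polynomial weight $\rho^{n-1}(1+\rho)^{-p'(n-1)/2}$ is slowly varying at scale $\pi$ for $\rho\ge\rho_0$, the analogous bound holds with the weight included. Summing over the blocks and using that $(1+\rho)^{-p'(n-1)/2}\le C\,c^{-1}|j_{n/2-1}(\rho)|^{p'}$ on $S$ delivers
$$
\int_{\rho_0}^\infty v_0^{1-p'}(\rho)(1+\rho)^{-p'(n-1)/2}\rho^{n-1}\,d\rho\le C'I_0<\infty.
$$

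The bounded region $[0,\rho_0]$ contains only finitely many zeros of $j_{n/2-1}$; around each zero the same pairing of a bad with a neighboring good subinterval—to which \eqref{vs1} or \eqref{vs2} still applies as a finite shift—controls the contribution, and on a neighborhood of the origin the trivial lower bound $j_{n/2-1}(\rho)\to 1$ lets one dominate this piece directly by $I_0$. The main obstacle I foresee is the combinatorics of the partition: one must arrange it so that a \emph{single} instance of \eqref{vs1} (or \eqref{vs2}) with $|A|$ exactly equal to the separation between $B$ and $G$ implements the transfer for every bad interval, while simultaneously verifying the required slow variation of the weight $\rho^{n-1-p'(n-1)/2}$ across blocks of fixed length.
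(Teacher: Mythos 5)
Your plan is correct and follows essentially the same route as the paper's proof: Proposition~\ref{P-nec1} plus the two-sided Bessel asymptotics, a partition of $[0,\infty)$ into good intervals where $|j_{n/2-1}(\rho)|\gtrsim (1+\rho)^{-(n-1)/2}$ and bad intervals near the zeros, the translation condition \eqref{vs1} (or \eqref{vs2}, with reversed orientation) applied to an adjacent interval of matching length to transfer the bad contribution, and slow variation of the factor $\rho^{n-1}(1+\rho)^{-p'(n-1)/2}$ over bounded shifts. The paper works uniformly with $\varepsilon$-neighborhoods of the zeros $q_k$ instead of your cosine-based good set above a threshold $\rho_0$, but this is only a cosmetic difference.
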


\begin{Rem}
If $v_0^{1-p'}$ satisfies a doubling condition, that is,
\[
\int_{2A}v_{0}^{1-p'}(t)\,dt\le C\int_{A} v_{0}^{1-p'}(t)\,dt,
\]
for all intervals $A$, where
$2A$ is the
interval twice the length of $A$ and with the midpoint coinciding with that of
$A$, then both \eqref{vs1} and \eqref{vs2} hold. If $v_0$ is monotonic, then
at least one of the conditions \eqref{vs1} and \eqref{vs2} hold.
\end{Rem}

\medskip
Weighted restriction theorems were
intensively studied
for piecewise power weights, i.e. in the form of
\begin{equation}\label{p-w1}
v(x)=\begin{cases} |x|^\alpha, & \textup{if} \ |x|\le 1, \\ |x|^\beta, &
\textup{if} \ |x|>1, \end{cases}
\end{equation}
see e.g. \cite{BS}.
The method of the proof of \cite[Cor.~2.8]{CS} can be used to prove the
following

\begin{Lemma}\label{2L-restr-gen-CS}
Let $d\mu$ and $d\nu$ be measures on $\R^n$, $n\ge 1$, and let $1\le p\le q$ and $s\ge
p$. An operator $T$ maps $L^p(d\mu)\to L^q(d\nu)$ boundedly if and only $T$
maps $L^s(w\,d\mu)\to L^q(d\nu)$ boundedly whenever $w^{-1}\in
L^{\frac{p}{s-p}}(d\mu)$ and
\[
\on{T}_{L^s(w\,d\mu)\to L^q(d\nu)}\le
C\|w^{-1}\|_{L^{\frac{p}{s-p}}(d\mu)}^{\frac{1}{s}}.
\]
\end{Lemma}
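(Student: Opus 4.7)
The plan is to prove both implications by a single application of Hölder's inequality with conjugate exponents $s/p$ and $s/(s-p)$ (meaningful since $s\ge p$), together with a well-chosen weight in the converse direction. No structural properties of $T$ are used beyond its boundedness between the stated spaces, so the argument is purely functional-analytic.

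\emph{Sufficiency of the unweighted bound.} Assume $T\colon L^p(d\mu)\to L^q(d\nu)$ is bounded. For any weight $w$ with $w^{-1}\in L^{p/(s-p)}(d\mu)$, I would write $|f|^p=(|f|^p w^{p/s})\cdot w^{-p/s}$ and apply Hölder with exponents $s/p$ and $s/(s-p)$ to obtain
\[
\|f\|_{L^p(d\mu)}\le \|w^{-1}\|_{L^{p/(s-p)}(d\mu)}^{1/s}\,\|f\|_{L^s(w\,d\mu)}.
\]
Composing with $\|Tf\|_{L^q(d\nu)}\le \|T\|\,\|f\|_{L^p(d\mu)}$ then gives the weighted estimate with the advertised dependence of the operator norm on $\|w^{-1}\|_{L^{p/(s-p)}(d\mu)}^{1/s}$.

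\emph{Necessity.} Given $f$, the natural choice is the extremizing weight $w=|f|^{p-s}$, which produces equality in the above Hölder step. A direct computation yields
\[
\|f\|_{L^s(w\,d\mu)}=\|f\|_{L^p(d\mu)}^{p/s},\qquad \|w^{-1}\|_{L^{p/(s-p)}(d\mu)}^{1/s}=\|f\|_{L^p(d\mu)}^{(s-p)/s},
\]
so the hypothesized bound collapses to $\|Tf\|_{L^q(d\nu)}\le C\,\|f\|_{L^p(d\mu)}^{(s-p)/s}\cdot \|f\|_{L^p(d\mu)}^{p/s}=C\,\|f\|_{L^p(d\mu)}$, as required.

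\emph{Main obstacle.} The only delicate point is defining $w=|f|^{p-s}$ on $\{f=0\}$ when $s>p$. I would either adopt the convention $0^{p-s}=+\infty$, so that $w^{-1}$ vanishes on this set and contributes nothing to either integral, or regularize via $w_\varepsilon=(|f|+\varepsilon)^{p-s}$ and pass to the limit, using local integrability of $|f|$ on compact sets (which suffices for the test-function class relevant to the applications in this paper) to verify $\|w_\varepsilon^{-1}\|_{L^{p/(s-p)}(d\mu)}\to \|f\|_{L^p(d\mu)}^{s-p}$. The boundary case $s=p$ is immediate: the hypothesis then reads $\|Tf\|_{L^q(d\nu)}\le C\|w^{-1}\|_\infty^{1/p}\|f\|_{L^p(w\,d\mu)}$ for $w^{-1}\in L^\infty(d\mu)$, and the admissible choice $w\equiv 1$ recovers the unweighted bound.
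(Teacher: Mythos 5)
Your argument is correct and essentially coincides with the paper's own proof: H\"older with conjugate exponents $s/p$ and $s/(s-p)$ applied to $f=w^{-1/s}\cdot w^{1/s}f$ for the sufficiency direction, and the extremizing weight $w=|f|^{p-s}$, for which the H\"older step is an equality, for the necessity direction. Your treatment of the set $\{f=0\}$ via the convention $0^{p-s}=+\infty$ (so that $w^{-1}=0$ there) is fine and is a point the paper silently ignores; only note that your alternative regularization $w_\varepsilon=(|f|+\varepsilon)^{p-s}$ does not work verbatim when $d\mu$ is an infinite measure such as Lebesgue measure, since then $w_\varepsilon^{-1}=(|f|+\varepsilon)^{s-p}$ never lies in $L^{p/(s-p)}(d\mu)$, so the convention route is the one to keep.
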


The proof is in Section~5. If we apply Lemma \ref{2L-restr-gen-CS} to the
restriction operator, with the the Tomas--Stein exponents $s=q=2$ and $p
=\frac{2(n+1)}{n+3}$ we require $w^{-1}\in L^{\frac{n+1}{2}}(\R^n)$. This
condition applied to piecewise power weight,
allows $\alpha <\frac{2n}{n+1} $ and $\beta
> \frac{2n}{n+1} $.

\medbreak
These exponents are not sharp: S.~Bloom and G.~Sampson have proved in \cite{BS}
a number of restriction theorems with piecewise power weights, and have
obtained, in most cases, sharp conditions on $\alpha$ and $\beta$. One of the
results in \cite[Thm. 5.6]{BS} is the following

\begin{Thm}\label{2T-Bloom}
Let $1 < p\le 2 $, $n\ge 2$, $2\le q\le \frac{n-1}{n+1}\,p'$. Let $v(x)$ given by
\eqref{p-w1}. Then \eqref{2eRestrIneq-w} with $U =1$ holds if and only if
$\alpha <n(p-1)$ and $\beta \ge 0$. Moreover, \eqref{2eRestrIneq-w} holds with $p=q=2$ also when $U= 1$ and
$v(x)$ is as in \eqref{p-w1} with $\alpha<n$ and $\beta>1$.
\end{Thm}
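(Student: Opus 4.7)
The plan is to treat necessity and sufficiency separately, with the ``moreover'' statement requiring a distinct spherical--harmonic argument for the behaviour at infinity.

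For necessity, the condition $\alpha<n(p-1)$ follows directly from Proposition~\ref{P-nec1}: since $j_{n/2-1}(0)=1$, the integrand $v^{1-p'}(x)|j_{n/2-1}(|x|)|^{p'}$ is comparable to $|x|^{\alpha(1-p')}$ near the origin, and local integrability on $B_1=\{|x|\le 1\}$ forces $\alpha(1-p')>-n$. For $\beta\ge 0$, test \eqref{2eRestrIneq-w} against translates $f_R(x)=\phi(x-Re_1)$ of a fixed $\phi\in C_0^\infty(\R^n)$: the left side of \eqref{2eRestrIneq-w} is $R$-independent, since translation produces only a unimodular phase on $\h{f_R}|_{\Sp^{n-1}}$, whereas the right side is comparable to $R^{\beta/p}\|\phi\|_p$ as $R\to\infty$.

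For sufficiency in the main part, the constraints $q\in[2,(n-1)p'/(n+1)]$ force $p\le 2(n+1)/(n+3)$, so $(p,q)$ lies in the Tomas--Stein range~\eqref{2eRestrIneq}. I would decompose $f=f_1+f_2$ with $f_1=f\chi_{B_1}$ and $f_2=f\chi_{B_1^c}$. Since $\beta\ge 0$ gives $v\ge 1$ on $B_1^c$, the unweighted Tomas--Stein bound handles $f_2$: $\|\h{f_2}\|_{L^q(\Sp^{n-1})}\le C\|f_2\|_p\le C\|f_2 v^{1/p}\|_p$. For $f_1$, dualize via Lemma~\ref{L-duality} to the equivalent inequality
\[
\int_{B_1}|\widetilde g(x)|^{p'}|x|^{\alpha(1-p')}\,dx\le C\|g\|_{L^{q'}(\Sp^{n-1})}^{p'},
\]
and combine the trivial bound $\|\widetilde g\|_\infty\le \|g\|_{L^1(\Sp^{n-1})}\le C\|g\|_{L^{q'}(\Sp^{n-1})}$ (using $|\Sp^{n-1}|<\infty$ and $q'\ge 1$) with the local integrability $\int_{B_1}|x|^{\alpha(1-p')}\,dx<\infty$, which is precisely the condition $\alpha<n(p-1)$.

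For the moreover part ($p=q=2$), I would again pass to the dual inequality $\int|\widetilde g|^2 v^{-1}\,dx\le C\|g\|_2^2$. The $L^\infty$ argument above handles the contribution from $B_1$ exactly when $\alpha<n$. For the exterior, expand $g=\sum_k Y_k$ in spherical harmonics and apply the Bochner--Hecke formula, which represents $\widetilde g(x)$ as a series in $Y_k(x/|x|)\,|x|^{1-n/2}J_{k+n/2-1}(|x|)$; orthogonality of the $Y_k$ on every sphere reduces the exterior estimate to the uniform-in-$k$ bound $\int_1^\infty|J_{k+n/2-1}(r)|^2\,r^{1-\beta}\,dr\le C$, which holds iff $\beta>1$. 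Necessity of $\alpha<n$ and $\beta>1$ in this range follows by specializing the same identity to the zonal term $Y_0$. The main obstacle I anticipate is precisely this uniform-in-$k$ Bessel estimate: one must split the integral at $r\sim k$, using the power-series bound $|J_\nu(r)|\lesssim(r/\nu)^\nu$ on $r\lesssim\nu$ together with $|J_\nu(r)|\lesssim r^{-1/2}$ on $r\gtrsim\nu$, and check that the transition regime $r\sim\nu$ does not spoil the uniform constant.
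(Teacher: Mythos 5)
First, a remark on context: this paper does not prove Theorem~\ref{2T-Bloom} at all --- it is quoted from Bloom--Sampson \cite{BS}, so your argument can only be judged on its own merits, not against a proof in the text. Your treatment of the main ``if and only if'' part is correct. The constraints $2\le q\le \frac{n-1}{n+1}p'$ indeed force $p\le\frac{2(n+1)}{n+3}$, so the exterior piece is handled by the unweighted Tomas--Stein inequality \eqref{2eRestrIneq} together with $v\ge 1$ on $\{|x|>1\}$ (this is where $\beta\ge 0$ enters), and the interior piece needs only $\int_{|x|\le 1}|x|^{\alpha(1-p')}dx<\infty$, i.e.\ $\alpha<n(p-1)$; in fact you do not even need Lemma~\ref{L-duality} there, since H\"older gives directly $|\h{f_1}(\omega)|\le \|v^{1/p}f_1\|_p\bigl(\int_{|x|\le1}|x|^{\alpha(1-p')}dx\bigr)^{1/p'}$ (if you do dualize, the lemma must be applied with the weight $v\chi_{B_1}$, which is what your displayed inequality amounts to). The two necessity arguments --- Proposition~\ref{P-nec1} near the origin and translates $\phi(\cdot-Re_1)$ at infinity --- are also fine.

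The gap is in the ``moreover'' part, exactly at the point you flagged. The reduction via $\widetilde{Y_k}(x)=c_n i^k Y_k(x/|x|)\,|x|^{1-n/2}J_{k+n/2-1}(|x|)$ and orthogonality is correct, and the exterior estimate is equivalent to $\sup_{\nu}\int_1^\infty J_\nu^2(r)\,r^{1-\beta}dr<\infty$ with $\nu=k+n/2-1$; but the two bounds you propose cannot prove this for all $\beta>1$. The power-series bound $|J_\nu(r)|\le (r/2)^\nu/\Gamma(\nu+1)$ is useful only for $r\lesssim 2\nu/e$ (beyond that it grows geometrically in $\nu$), and $|J_\nu(r)|\le Cr^{-1/2}$ with $C$ independent of $\nu$ is false through the turning point, where $J_\nu(\nu)\asymp\nu^{-1/3}$ and the envelope is only of size $r^{-1/4}(r-\nu)^{-1/4}$ until $r-\nu\gtrsim\nu$. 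If on the whole zone $r\asymp\nu$ you use only the uniform bound $|J_\nu|\lesssim\nu^{-1/3}$, the contribution is $\lesssim\nu^{-2/3}\nu^{2-\beta}=\nu^{4/3-\beta}$, which is unbounded for $1<\beta<4/3$, so the transition regime genuinely does ``spoil'' your constants. The fix is standard but must be invoked: either the uniform Airy-regime estimates, e.g.\ $|J_\nu(r)|\lesssim \nu^{-1/4}\bigl(\nu^{1/3}+|r-\nu|\bigr)^{-1/4}$ for $\nu/2\le r\le 2\nu$, together with $|J_\nu(r)|\lesssim r^{-1/2}$ for $r\ge2\nu$ and exponential smallness for $r\le \nu/2$, which give $\int_1^\infty J_\nu^2(r)r^{1-\beta}dr\lesssim_\beta \nu^{2/3-\beta}+\nu^{1-\beta}$, uniformly bounded for $\beta>1$; or, more simply, the Weber--Schafheitlin evaluation $\int_0^\infty J_\nu^2(r)\,r^{1-\beta}dr=\frac{\Gamma(\beta-1)\,\Gamma(\nu+1-\beta/2)}{2^{\beta-1}\Gamma(\beta/2)^2\,\Gamma(\nu+\beta/2)}\asymp_\beta \nu^{1-\beta}$, valid for $1<\beta<2\nu+2$, which settles the large-$\nu$ range at once (small $\nu$ being trivial for fixed $\beta>1$). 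Finally, note that the theorem claims only sufficiency in the $p=q=2$ regime; your added necessity remarks via the zonal term are correct but not required.
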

\medskip

We also notice that weighted restriction theorems have been proved for weights
in the Campanato--Morrey spaces: for $0\le \alpha\le \frac{n}{r}$ and $r\ge 1$,
the Campanato--Morrey space $\cL^{\alpha,r}$ is defined as
\[
\cL^{\alpha,r}=\Biggl\{ f\in L^r_\text{loc}(\R^n) \colon\|f\|_{r,
\alpha}=\sup_{\substack{ x\in\R^n \\ \rho >0 } }
\rho^\alpha\biggl(\rho^{-n}\int_{|y-x|<\rho}
|f(y)|^r\,dy\biggr)^{\frac{1}{r}}<\infty\Biggr\}.
\]
Note that $\cL^{\alpha,\frac{n}{\alpha}}= L^{\frac{n}{\alpha}}(\R^n)$ and
$\cL^{0,r}(\R^n)=L^{\infty}(\R^n)$.

A.~Ruiz and L.~Vega have proved in \cite{RV} the following

\begin{Thm}\label{T-RuizV}
Suppose that $V\in \cL^{\alpha,r}$, with $\frac{\alpha}{n}\le \frac{1}{r}
<\frac{2(\alpha-1)}{n-1}$ and $\frac{2n}{n+1}<\alpha\le n$, $n\ge 2$.
Then, the inequality
\begin{equation}\label{2e-WRestrIneq}
\left(\int_{\Sp^{n-1}} |\h{f}(\omega)|^2\,d\sigma(\omega)\right)^{\frac{1}{2}}
\le C\left(\int_{\R^n} |f(x)|^2 V(x)\,dx\right)^{\frac{1}{2}},
\end{equation}
holds with $C=C'\|V\|_{\alpha,r}^{\frac{1}{2}}$.
\end{Thm}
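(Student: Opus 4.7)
\emph{Approach.} I would follow the standard $TT^{*}$ strategy, combined with the classical stationary-phase decay of $\widehat{d\sigma}$ and a Morrey-space estimate for a weighted fractional-integral kernel.

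\emph{Step 1 (duality and $TT^{*}$).} By Lemma~\ref{L-duality} applied with $U\equiv 1$ and $p=q=2$, the inequality \eqref{2e-WRestrIneq} is equivalent to the dual extension bound
\[
\Bigl\|\int_{\Sp^{n-1}} g(\omega)\,e^{i\omega y}\,d\sigma(\omega)\Bigr\|_{L^{2}(V^{-1}\,dy)}\le C\,\|g\|_{L^{2}(\Sp^{n-1})}.
\]
Equivalently, expanding the square in the direct formulation gives
$\|\h{f}\|_{L^{2}(d\sigma)}^{2}=\iint f(x)\,\overline{f(y)}\,\widehat{d\sigma}(y-x)\,dx\,dy$; after substituting $f=V^{-1/2}h$, I am reduced to the bilinear inequality
\[
\Bigl|\iint_{\R^{n}\times\R^{n}} h(x)\,\overline{h(y)}\,\frac{\widehat{d\sigma}(y-x)}{V^{1/2}(x)\,V^{1/2}(y)}\,dx\,dy\Bigr|\le C'\,\|V\|_{\alpha,r}\,\|h\|_{L^{2}(\R^{n})}^{2}.
\]

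\emph{Step 2 (stationary-phase reduction).} I invoke the classical decay $|\widehat{d\sigma}(\xi)|\le C(1+|\xi|)^{-(n-1)/2}$, valid because $\Sp^{n-1}$ has everywhere non-vanishing Gaussian curvature. The problem is thereby reduced to proving the $L^{2}$ boundedness, with norm controlled by $\|V\|_{\alpha,r}$, of the positive integral operator with kernel
\[
K(x,y)=\frac{V^{-1/2}(x)\,V^{-1/2}(y)}{(1+|x-y|)^{(n-1)/2}}.
\]

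\emph{Step 3 (Morrey-type fractional-integral bound).} I decompose $K$ dyadically in $|x-y|\sim 2^{j}$, $j\ge 0$. On each shell the kernel is, up to a factor $2^{-j(n-1)/2}$, a truncated fractional-integral kernel of order $\beta=(n+1)/2$ sandwiched between two copies of $V^{-1/2}$. A Schur-type argument driven by the Morrey growth estimate $\rho^{-n}\int_{|y-x|<\rho}V(y)^{r}\,dy\le \rho^{-r\alpha}\|V\|_{\alpha,r}^{r}$ (and its dual consequence for averages of $V^{-1/2}$) produces a shell-by-shell bound of the form $\lesssim \|V\|_{\alpha,r}\,2^{-j\delta}$ for some $\delta>0$, provided
\[
\frac{\alpha}{n}\le\frac{1}{r}<\frac{2(\alpha-1)}{n-1},\qquad \frac{2n}{n+1}<\alpha\le n.
\]
Summing the resulting geometric series in $j$ gives the bilinear bound of Step~1 with constant $C'\|V\|_{\alpha,r}$, and hence the restriction inequality with $C=C'\|V\|_{\alpha,r}^{1/2}$ after taking square roots.

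\emph{Main obstacle.} The hard part is Step~3. The lower bound $\alpha/n\le 1/r$ is simply the defining Morrey scaling, but the upper bound $1/r<2(\alpha-1)/(n-1)$ is precisely what is required to absorb the local singularity of the fractional-integral kernel of order $(n+1)/2$ inside each dyadic shell and still have a summable series, while $\alpha>2n/(n+1)$ ensures that the admissible range of $r$ is nonempty. Pinning down the correct weighted Schur estimate on each shell, and verifying that the constants combine into a single linear factor of $\|V\|_{\alpha,r}$ rather than a higher power, is the technical heart of the proof; the rest is standard $TT^{*}$ and stationary phase.
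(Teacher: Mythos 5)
The first thing to say is that the paper does not prove this theorem at all: Theorem \ref{T-RuizV} is quoted from Ruiz--Vega \cite{RV}, and the only argument the paper supplies is that the convolution estimate proved there, $\|\h{d\sigma}*f\|_{L^2(V)}\le C'\|V\|_{\alpha,r}\|f\|_{L^2(V^{-1}dx)}$, is converted into a restriction estimate by the duality/$TT^*$ argument of Lemma \ref{L-duality}. Your Step 1 reproduces that duality step, but Steps 2--3 are an attempt to reprove the Ruiz--Vega inequality itself, and as sketched they do not work, for two distinct reasons.

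First, the weight bookkeeping. With $\|f\|_{L^2(V\,dx)}$ on the right, your $TT^*$ kernel correctly carries the factors $V^{-1/2}(x)V^{-1/2}(y)$; but then Step 3 needs upper bounds on averages involving $V^{-1/2}$, and there is no ``dual consequence'' of $V\in\cL^{\alpha,r}$ for $V^{-1}$: a Morrey bound gives no lower bound on $V$ whatsoever. In fact the inequality in that form cannot hold with a constant depending only on $\|V\|_{\alpha,r}$: take $V(x)=|x|^{-\alpha}$ (admissible, since the hypotheses allow $\alpha r<n$) and $f_R(x)=e^{i\omega_0\cdot x}\phi(x-Re_1)$ with $\phi$ a fixed bump and $\omega_0\in\Sp^{n-1}$; then $\|\h{f_R}\|_{L^2(d\sigma)}\gtrsim 1$ uniformly in $R$ while $\|f_R\|_{L^2(V\,dx)}\approx R^{-\alpha/2}\to0$. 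What Lemma \ref{L-duality} actually extracts from the quoted Ruiz--Vega bound is the restriction inequality with $\|f\|_{L^2(V^{-1}dx)}$ on the right (so the $TT^*$ kernel carries $V^{1/2}$, which the Morrey hypothesis can control). You should fix this sign in the exponent of $V$ before anything else; your Step 3 is aimed at a bilinear form the hypothesis cannot reach.

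Second, even with the corrected weights, Step 2 is too lossy. Replacing $\h{d\sigma}(x-y)$ by the majorant $(1+|x-y|)^{-(n-1)/2}$ reduces you to the positive kernel $K(x,y)=V^{1/2}(x)(1+|x-y|)^{-(n-1)/2}V^{1/2}(y)$, and this operator is unbounded on $L^2$ in part of the claimed range: for $V=|x|^{-\alpha}$ and $\chi_R=\chi_{\{|x|\sim R\}}$ one has $\langle K\chi_R,\chi_R\rangle/\|\chi_R\|_2^2\approx R^{\frac{n+1}{2}-\alpha}\to\infty$ whenever $\alpha<\frac{n+1}{2}$, and the interval $\frac{2n}{n+1}<\alpha<\frac{n+1}{2}$ is nonempty for every $n\ge2$. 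Hence no Schur or dyadic argument applied to $|\h{d\sigma}|$ can give the full range $\alpha>\frac{2n}{n+1}$; the oscillation of $\h{d\sigma}$ is essential there. The proofs in \cite{RV} (see also \cite{CS}, \cite{CR}) keep the cancellation by decomposing $\h{d\sigma}$ dyadically and interpolating the size bound with an $L^2$ bound coming from the Fourier support of each piece in a thin neighbourhood of $\Sp^{n-1}$, combined with Morrey estimates for averages of $V$; your absolute-value reduction could at best reach the easier regime $\alpha\ge\frac{n+1}{2}$. If your aim is only what this paper needs, the short correct route is the paper's own: cite \cite{RV} and apply Lemma \ref{L-duality} with $p=q=2$, noting that this yields \eqref{2e-WRestrIneq} with $V^{-1}$ in place of $V$ on the right-hand side.
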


In fact, in \cite{RV} it is proved that
\[
\|\h{d\sigma}*f\|_{L^2(V)}\le C'\|V\|_{\alpha, r}\|f\|_{L^2(V^{-1}\R^n)}
\]
but we can use Lemma \ref{L-duality} to shows that this inequality is equivalent to
\eqref{2e-WRestrIneq}. See also \cite{B2}.

Special cases of the restriction inequality in \cite{RV} are in \cite{CS} and
\cite{CR}. F.~Chiarenza and A.~Ruiz have proved in~\cite{CR} a version of
\eqref{2e-WRestrIneq} with special doubling weights; S.~Chanillo and E.~Sawyer
have proved in~\cite[Cor. 2.8]{CS}, that \eqref{2e-WRestrIneq} holds when $V$
is in the Fefferman--Phong class $F_r$, with $r\ge \frac{n-1}{2}$. In
particular, \eqref {2e-WRestrIneq} holds when $V^{-1}\in
L^{\frac{n-1}{2}}(\R^n)$.

\section{New Pitt inequalities}

In this section we obtain new Pitt-type inequalities for the Fourier transforms
using restriction inequalities from Section 2.

\begin{Thm}\label{1T-Restr-Pitt}
Assume that the restriction inequality \eqref{1e-wRestrIneq} holds for some
$1\le p\le q\le \infty$. Let $w(\rho)$ be a measurable function for which
$v(\rho x)\le w(\rho)v(x)$ for a.e. $\rho>0$ and $x\in\R^n$.
Suppose that $u$ is radial, and $u(x)=u_0(|x|)$ satisfies
\begin{equation}\label{1e-Cond-on-C}
\int_0^\infty \rho^{n-1-\frac{qn}{p'}} u_0(\rho) w^{\frac{q}{p}}(\rho)\,d\rho <\infty.
\end{equation} Then,
\begin{equation}\label{3e-NewPitt2}
\left(\int_{\R^n } |\h{f}(x)|^q U\Bigl(\frac{x}{|x|}\Bigr) u(x) \,dx\right)^{\frac{1}{q}}
\le C\|v^{\frac{1}{p}} f\|_{p}.
\end{equation}
\end{Thm}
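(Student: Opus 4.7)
The plan is to reduce the Euclidean Pitt inequality \eqref{3e-NewPitt2} to the weighted spherical restriction inequality \eqref{1e-wRestrIneq} on each sphere of radius $\rho$ via a dilation argument, then integrate in $\rho$ against the radial weight~$u_0$. The hypothesis $v(\rho x)\le w(\rho)v(x)$ is precisely what is needed to control how the $L^p(v\,dx)$-norm transforms under scaling.

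First, write the left-hand side of \eqref{3e-NewPitt2} in polar coordinates as
\begin{equation*}
\int_{\R^{n}}|\h{f}(x)|^{q}U\bigl(x/|x|\bigr)u(x)\,dx
=\int_{0}^{\infty}\rho^{n-1}u_{0}(\rho)\int_{\Sp^{n-1}}|\h{f}(\rho\omega)|^{q}U(\omega)\,d\sigma(\omega)\,d\rho.
\end{equation*}
For fixed $\rho>0$ introduce the dilate $f_{\rho}(x)=f(x/\rho)$, whose Fourier transform satisfies $\widehat{f_{\rho}}(\omega)=\rho^{n}\h{f}(\rho\omega)$. Applying the assumed restriction inequality \eqref{1e-wRestrIneq} to $f_{\rho}$ yields
\begin{equation*}
\rho^{n}\left(\int_{\Sp^{n-1}}|\h{f}(\rho\omega)|^{q}U(\omega)\,d\sigma(\omega)\right)^{\!1/q}
\le C\left(\int_{\R^{n}}|f(x/\rho)|^{p}v(x)\,dx\right)^{\!1/p}.
\end{equation*}

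Next, change variables $x=\rho y$ on the right-hand side and use the hypothesis $v(\rho y)\le w(\rho)v(y)$ to obtain
\begin{equation*}
\left(\int_{\Sp^{n-1}}|\h{f}(\rho\omega)|^{q}U(\omega)\,d\sigma(\omega)\right)^{\!1/q}
\le C\,\rho^{-n/p'}w(\rho)^{1/p}\|v^{1/p}f\|_{p},
\end{equation*}
since $n/p-n=-n/p'$. This is the key pointwise-in-$\rho$ estimate: on every sphere of radius $\rho$, the weighted $L^{q}$-norm of $\h{f}$ with weight $U$ is controlled by the $L^{p}(v\,dx)$-norm of $f$, with an explicit $\rho$-dependent factor.

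Finally, raise this estimate to the $q$th power, multiply by $\rho^{n-1}u_{0}(\rho)$, integrate over $\rho\in(0,\infty)$, and invoke the finiteness assumption \eqref{1e-Cond-on-C}:
\begin{equation*}
\int_{\R^{n}}|\h{f}(x)|^{q}U(x/|x|)u(x)\,dx
\le C^{q}\|v^{1/p}f\|_{p}^{q}\int_{0}^{\infty}\rho^{n-1-qn/p'}u_{0}(\rho)w(\rho)^{q/p}\,d\rho.
\end{equation*}
Taking the $q$th root gives \eqref{3e-NewPitt2}. There is no serious obstacle here: the proof is essentially scaling plus Fubini, and everything is justified for $f\in C_{0}^{\infty}(\R^{n})$ because then $f_{\rho}$ also belongs to $C_{0}^{\infty}(\R^{n})$. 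The only subtlety worth noting is that this argument does not actually require $p\le q$; the hypothesis $1\le p\le q\le\infty$ is inherited from the hypothesis on \eqref{1e-wRestrIneq}, and the same reasoning would work whenever the spherical restriction inequality is available.
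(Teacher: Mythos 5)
Your proof is correct and follows essentially the same route as the paper: apply the weighted restriction inequality to the dilate of $f$, use the scaling hypothesis $v(\rho x)\le w(\rho)v(x)$ to obtain the spherical estimate with factor $\rho^{-n/p'}w(\rho)^{1/p}$, then integrate in $\rho$ against $u_0(\rho)\rho^{n-1}$ and invoke \eqref{1e-Cond-on-C}. The only difference is notational (you dilate $f$ directly rather than the normalized $g=\rho^{-n}f(\cdot/\rho)$ used in the paper), which changes nothing of substance.
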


Theorem \ref{new} is an easy consequence of Theorem \ref{1T-Restr-Pitt} (with
$U\equiv v\equiv 1$) and the Fefferman--Tao restriction theorem.

In the next section we will show that our theorem can be applied in cases where
prior results are not applicable.

Our next result deals with piecewise power weight $v$ defined by \eqref{p-w1}. In order to use
Theorem~\ref{1T-Restr-Pitt}, we need to find $w(\rho)$ so that $v(\rho x)\le
w(\rho)v(x)$, $\rho>0$. A straightforward calculation shows that in this case
\begin{equation}\label{maj}
w(\rho)\le w_{0}(\rho):=\max\{\rho^\alpha,\ \rho^\beta\}.
\end{equation}

Using Theorem \ref{1T-Restr-Pitt} and weighted restriction inequalities from
\cite{BS} (see~Section~3), we have

\begin{Cor}\label{1T-Rest-Bloom}
Let $1 < p\le 2 $ and $2\le q\le \frac{n-1}{n+1}\,p'$, with $n\ge 2$. Let $v$ be a
piecewise power weight $v(x)$ given by \eqref{p-w1} with $\alpha <n(p-1)$ and
$\beta \ge 0$. Let $u$ be a radial weight that satisfies
\[
\int_{0}^\infty\rho^{n-1-\frac{qn}{p'}} u_0(\rho)w_{0}^{q/p} (\rho)\,d\rho <\infty,
\]
where $w_{0}$ is given by \eqref{maj}. Then, for every $f\in C^\infty_0(\R^n)$,
\begin{equation}\label{1e-Pitt-Bloom}
\|u^{\frac{1}{q}} \h{f}\|_q\le C\|v^{\frac{1}{p}} f\|_p.
\end{equation}
\end{Cor}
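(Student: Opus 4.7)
The plan is to derive this corollary as a direct combination of the previously stated Theorem~\ref{2T-Bloom} (Bloom--Sampson) and Theorem~\ref{1T-Restr-Pitt}. Theorem~\ref{2T-Bloom} supplies, under exactly the assumed ranges $1<p\le 2$, $2\le q\le \tfrac{n-1}{n+1}p'$ together with $\alpha<n(p-1)$ and $\beta\ge 0$, the weighted restriction inequality \eqref{2eRestrIneq-w} with $U\equiv 1$ for the piecewise power weight $v$ in \eqref{p-w1}. Theorem~\ref{1T-Restr-Pitt} then upgrades any such restriction inequality to the Pitt inequality \eqref{3e-NewPitt2}, provided one exhibits a dilation majorant $w(\rho)$ with $v(\rho x)\le w(\rho)v(x)$ and verifies the integrability condition \eqref{1e-Cond-on-C}. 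Since the corollary assumes exactly \eqref{1e-Cond-on-C} for $w_0(\rho)=\max\{\rho^\alpha,\rho^\beta\}$, the only genuine task is to justify the majorization \eqref{maj}.

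To verify $v(\rho x)\le w_0(\rho)v(x)$ I would split into four cases according as $|x|$ and $\rho|x|$ lie in $(0,1]$ or $(1,\infty)$. In the two cases where both quantities are on the same side of $1$, the ratio $v(\rho x)/v(x)$ equals $\rho^\alpha$ or $\rho^\beta$ and is trivially bounded by $w_0(\rho)$. In the crossover case $|x|\le 1<\rho|x|$ one has $\rho>1$ and $v(\rho x)/v(x)=\rho^\beta|x|^{\beta-\alpha}$; if $\beta\ge\alpha$ the factor $|x|^{\beta-\alpha}\le 1$ gives the bound $\rho^\beta\le w_0(\rho)$, while if $\beta<\alpha$ the inequality $|x|>1/\rho$ yields $|x|^{\beta-\alpha}<\rho^{\alpha-\beta}$ and hence the bound $\rho^\alpha\le w_0(\rho)$. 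The remaining case $|x|>1\ge\rho|x|$ is symmetric and handled in the same way.

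With \eqref{maj} in hand, I would apply Theorem~\ref{1T-Restr-Pitt} with $U\equiv 1$ and $w=w_0$: the assumed integrability is precisely \eqref{1e-Cond-on-C}, and the conclusion \eqref{3e-NewPitt2} collapses to \eqref{1e-Pitt-Bloom}. I do not anticipate any real obstacle; the only mildly technical point is the four-case dilation check above, and everything else is pure bookkeeping against the two cited theorems.
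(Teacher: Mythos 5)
Your proposal is correct and follows essentially the same route as the paper: the paper also obtains the corollary by feeding the Bloom--Sampson restriction theorem (Theorem~\ref{2T-Bloom}, with $U\equiv 1$) into Theorem~\ref{1T-Restr-Pitt} via the majorant $w_{0}(\rho)=\max\{\rho^{\alpha},\rho^{\beta}\}$ of \eqref{maj}. Your four-case verification of $v(\rho x)\le w_{0}(\rho)v(x)$ simply makes explicit the ``straightforward calculation'' the paper leaves to the reader.
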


\begin{Rem}
This corollary is valid for all piecewise power weights
$v$ and exponents $p,\ q$ for which the restriction theorems in \cite{BS} hold.
\end{Rem}

The following result uses weights in a Campanato--Morrey
class~$\cL^{\alpha,r}$ (see Section~2 for a definition).

\begin{Cor}\label{1T-Rest-Camp}
Let $V \in \cL^{\alpha,r}$, with $\frac{2n}{n+1}<\alpha\le n$ and
$\frac{\alpha}{n}\le \frac{1}{r}<\frac{2(\alpha-1)}{n-1}$, $n\ge 2$. Assume that there
exists a measurable function $w(\rho)$ for which $V(\rho x)\le w(\rho)V(x)$ for
a.e. $\rho>0$ and $x\in\R^n$,
and that $ u(x)=u_0(|x|)$ satisfies
\begin{equation}\label{2-p=q=2}
\int_0^\infty \rho^{-1 } u_0(\rho) w (\rho)\,d\rho <\infty.
\end{equation}
Then, for every $1\le p\le 2 $, the following weighted Hausforff-Young inequality holds
\begin{equation}\label{1e-Pitt-Camp}
\|u^{\frac{2}{p'}} \h{f}\|_{p'} \le C\| V^{\frac{2}{p'}}f\|_p
\quad f\in C^\infty_0(\R^n).
  \end{equation}
\end{Cor}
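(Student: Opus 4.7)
The plan is two-step: first establish the $L^2$-endpoint of \eqref{1e-Pitt-Camp} by feeding the Ruiz--Vega restriction theorem into Theorem \ref{1T-Restr-Pitt}, and then interpolate with the trivial $L^1\to L^\infty$ Hausdorff--Young bound to cover the full range $1\le p\le 2$.

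For the endpoint, the indices $\alpha,r$ are exactly those required by Theorem \ref{T-RuizV}, which therefore yields the weighted spherical restriction inequality
\[
\Bigl(\int_{\Sp^{n-1}}|\h f(\omega)|^2\,d\sigma(\omega)\Bigr)^{1/2}\le C\|V\|_{\alpha,r}^{1/2}\Bigl(\int_{\R^n}|f(x)|^2 V(x)\,dx\Bigr)^{1/2},
\]
i.e.\ an instance of \eqref{1e-wRestrIneq} at $p=q=2$ with $U\equiv1$ and weight $V$ on the right-hand side. I would then feed this into Theorem \ref{1T-Restr-Pitt} at $p=q=2$: the pointwise bound $V(\rho x)\le w(\rho)V(x)$ is precisely the required dilation hypothesis, the integrability condition \eqref{1e-Cond-on-C} specializes at these exponents to $\int_0^\infty \rho^{-1}u_0(\rho)w(\rho)\,d\rho<\infty$, which is \eqref{2-p=q=2}, and the conclusion becomes the weighted Plancherel-type endpoint
\[
\Bigl(\int_{\R^n}|\h f(x)|^2 u(x)\,dx\Bigr)^{1/2}\le C\Bigl(\int_{\R^n}|f(x)|^2 V(x)\,dx\Bigr)^{1/2}.
\]

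I would then combine this $L^2$-endpoint with the trivial bound $\|\h f\|_\infty\le\|f\|_1$ via Stein--Weiss (weighted Riesz--Thorin) interpolation. Writing $1/p=1-\theta/2$ so that $\theta=2/p'\in[0,1]$, this parameter produces on the input side the interpolated weight $V$ raised to the power $2/p'$ (multiplying $f$) and on the output side $u$ raised to the power $2/p'$ (multiplying $\h f$), which is exactly the form of \eqref{1e-Pitt-Camp}. At $p=1$ this degenerates to the trivial Hausdorff--Young bound; at $p=2$ it recovers the $L^2$-endpoint above.

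The main obstacle is the careful matching of weight exponents in the interpolation: one must translate the weights $u$ and $V$ between the convention in which they act inside the $L^2$-norms on $|f|^2$ and $|\h f|^2$ (as in Ruiz--Vega and Theorem \ref{1T-Restr-Pitt}) and the multiplicative convention of \eqref{1e-Pitt-Camp}, where $V^{2/p'}$ and $u^{2/p'}$ multiply $f$ and $\h f$ directly. Since the Fourier transform is linear and the weights are pointwise nonnegative, the analytic content of Stein--Weiss is standard, and no further input is needed beyond the two inequalities above.
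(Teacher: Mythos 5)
Your proposal is correct and follows essentially the same route as the paper: the paper likewise obtains the $L^2$ endpoint $\|\h{f}\|_{L^2(u\,dy)}\le C\|f\|_{L^2(V\,dx)}$ by feeding Theorem \ref{T-RuizV} into Theorem \ref{1T-Restr-Pitt} at $p=q=2$ (where \eqref{1e-Cond-on-C} reduces exactly to \eqref{2-p=q=2}), and then interpolates against $\|\h{f}\|_\infty\le\|f\|_1$ via the Stein--Weiss change-of-measure result (stated there as Lemma \ref{L-SW}) with precisely your parameter $t=2/p'$. The weight-exponent bookkeeping you flag at the end is handled in the paper by that lemma, leading to the same conclusion $\|\h{f}\|_{L^{p'}(u^{2/p'}dy)}\le C\|f\|_{L^{p}(V^{2/p'}dx)}$.
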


\section{Comparison of Theorems \ref{heinig} and \ref{new}}

In this section we give an example of radial weight $u(x)=u_0(|x|)$ that
satisfies the conditions of Theorem \ref{new} while does not satisfy the
conditions \eqref{1e-weight-cond} in Theorem \ref{heinig}.

We recall that Theorem \ref{new} states the Pitt inequality \begin{equation}\label{pp}
\|u^{\frac{1}{q}}\h{f}\|_q\le C\|f\|_p
\end{equation} holds with $1\le q\le \frac{n-1}{n+1}\,p'$ and
$ 1\le p < \frac{2(n+2)}{n+4}$ whenever $u(x)=u_0(|x|)$ satisfies
\begin{equation}\label{our}
\int_{0}^\infty \rho^{-a}u_0(\rho)\rho^{n-1}\,d\rho<\infty,\quad
a=\frac{qn}{p'}>0.
\end{equation}
On the other hand, when $u$ is radial and $v\equiv 1$, the sufficient condition
\eqref{1e-weight-cond} in Theorem \ref{heinig} states that
\[
\int_0^s u^*(t)dt\le C s^{\frac{q}{p'}}.
\]
The latter is equivalent to the following condition:
\begin{equation}\label{E-cond}
\sup_E |E|^{-\frac{q}{p'}}\int_{E}u\,dx<C,
\end{equation}
where supremum is taken over all measurable $E$,
$|E|>0$.

Let now $E_{0}$ be a measurable subset of $\R_{+}$. Consider the radial set
$E=\{x\in \R^{n}\colon |x|\in E_{0}\}$. For such set, we can rewrite
\eqref{E-cond} as follows:
\begin{equation}\label{E0-cond}
\int_{E_{0}}u_{0}(\rho)\rho^{n-1}\,d\rho\le
C|E|^{\frac{q}{p'}}=C\left(\int_{E_{0}}\rho^{n-1}\,d\rho\right)^{\frac{q}{p'}}.
\end{equation}
Let $A=\cup_{k=1}^{\infty}A_{k}$, where $A_{k}=(k,k+k^{-n-1})$. Set
\begin{equation}\label{weight}
u_{0}(\rho)\rho^{n-1}=\sum_{k=1}^{\infty}k^{n}\chi_{A_{k}}(\rho).
\end{equation}
Then condition \eqref{our} holds (and so also the Pitt inequality \eqref{pp}) since
\begin{align*}
\int_{0}^\infty \rho^{-a}u_0(\rho)\rho^{n-1}\,d\rho&=
\sum_{k=1}^{\infty}k^{n}\int_{k}^{k+k^{-n-1}}\rho^{-a}\,d\rho
\\
&\le \sum_{k=1}^{\infty}k^{n}k^{-a}k^{-n-1}=\sum_{k=1}^{\infty}k^{-1-a}<\infty
\end{align*}
and $a>0$.

On the other hand, taking $E_N=\{x\in \R^{n}\colon |x|\in \cup_{k=1}^{N}A_{k}\}$, we get
\[
\int_{E_{N}}\rho^{n-1}\,d\rho=\sum_{k=1}^{N}\int_{k}^{k+k^{-n-1}}\!\!\!\rho^{n-1}\,d\rho\le
\sum_{k=1}^{\infty}(k+1)^{n-1}k^{-n-1}<C.
\]
However,
\begin{equation}\label{u0-int}
\int_{E_{N}}u_{0}(\rho)\rho^{n-1}\,d\rho=
\sum_{k=1}^{N}k^{n}\int_{k}^{k+k^{-n-1}}\,d\rho
\asymp \ln N.
\end{equation}
Therefore, \eqref{E0-cond} (and so also \eqref{E-cond}), do not hold as $N\to \infty$.

\medskip
It is worthwhile to remark that for the radial weights $u$, the
necessary condition \eqref{1e-weight-cond-2} for the Pitt inequality
\eqref{our} to hold (see Theorem \ref{nec}) can be written as
\[
\sup_{s>0}\left(\int_{0}^{s}u_{0}(\rho)\rho^{n-1}\,d\rho\right)^{\frac 1 q}
\left(\int_{0}^{c_n/s}\rho^{n-1}\,d\rho\right)^{\frac 1 {p'}}<C
\]
or, equivalently,
\begin{equation}\label{u0-int-s-cond}
\sup_{s>0} s^{-a}\int_{0}^{s}u_{0}(\rho)\rho^{n-1}\,d\rho< C,
\end{equation}
where $a=\frac{qn}{p'}>0$. For the weight $u$ given by \eqref{weight} it can be easily
checked since
\[
\int_{0}^{s}u_{0}(\rho)\rho^{n-1}\,d\rho\le \sum_{k=1}^{[s]+1}k^{-1}\leq 1+\ln{}(s+1).
\]
This of course implies \eqref{u0-int-s-cond} since we only have to consider
the case $s\to \infty$.

\section{Proofs of the main results}


\begin{proof}[Proof of the Theorem \ref{nec}]
Let us assume that Pitt inequality \eqref{1e-Genpitt} hold.

\smallbreak
(1) \ Following \cite{H}, consider the function $f=\chi_{A}v^{1-p'}\in
L^{p}(v)$. For any set $B\subset \R^{n}$ we get
\[
C\|v^{\frac{1}{p}} f\|_p\ge\|u^{\frac{1}{q}} \h{f}\|_q\ge
\left(\int_{B}|\h{f}(\xi)|^{q}u(\xi)\,d\xi\right)^{\frac 1 q},
\]
where
\[
\|v^{\frac{1}{p}}f\|_p=\left(\int_{A}(v^{1-p'}(x))^{p}v(x)\,dx\right)^{\frac 1 p}=
\left(\int_{A}v^{1-p'}(x)\,dx\right)^{\frac 1 p}>0
\]
and
\[
|\h{f}(\xi)|\ge\left|\int_{A}v^{1-p'}(x)\cos{}(x\xi)\,dx\right|,\quad \xi\in B.
\]
Let $B=c_{n}A^{*}$, where $c_{n}<\pi/2$ and $A^{*}$ is polar set of the set~$A$.
Then for any $x\in A$ and $\xi\in B$ we have $|x\xi|\leq c_{n}$ and
$\cos{}(x\xi)\ge \cos c_{n}>0$. Therefore,
\[
|\h{f}(\xi)|\ge \cos c_{n}\int_{A}v^{1-p'}(x)\,dx,\quad \xi\in B.
\]
Hence,
\begin{align*}
C\left(\int_{A}v^{1-p'}(x)\,dx\right)^{\frac 1 p}&\ge
\left(\int_{B}|\h{f}(\xi)|^{q}u(\xi)\,d\xi\right)^{\frac{1}{q}}\\ &\ge \cos
c_{n}\left(\int_{A}v^{1-p'}(x)\,dx\right)\left(\int_{B}u(\xi)\,d\xi\right)^{\frac 1 q},
\end{align*}
or, equivalently,
\[
\left(\int_{c_{n}A^{*}}u(\xi)\,d\xi\right)^{\frac 1 q}\left(\int_{A}v^{1-p'}(x)\,dx\right)^{1/p'}<C.
\]

(2) \ If both weights $u$ and $v$ are radial, then the function
$f=\chi_{A}v^{1-p'}$ and its Fourier transform are also radial. Moreover,
taking the ball $A=sB^{n}$, we get
\[
\h{f}(\xi)=\omega_{n-1}\int_{A}v^{1-p'}(x)j_{n/2-1}(|\xi|x)\,dx.
\]
Let $q_{n/2-1}$ be the first zero of the normalized Bessel function $j_{n/2-1}(t)$. Note
that $q_{n/2-1}\ge q_{-1/2}=\pi/2$ and $q_{n/2-1}\sim n/2$ for $n\ge 1$. Then
$j_{n/2-1}(t)\ge j_{n/2-1}(c_{n})$, where $c_{n} \ge t$ can be taken as follows:
$\pi/2<c_n<q_{n/2-1}$ for $n\ge 2$. The rest of the proof is the same as
in~(1).

\smallbreak
(3) \ To prove this part, we use ideas similar to \cite{berndt}. In order to
consider translations of the sets $A$ and $c_{n}A^{*}$ by the vectors $x_{0}$
and $\xi_{0}$ correspondingly, it is enough to consider the function
$g(x)=f(x-x_{0})e^{-ix\xi_{0}}$ so that $|g(x)|=|f(x-x_{0})|$ and
$|\h{g}(\xi)|=|\h{g}(\xi-\xi_{0})|$. The integral condition \eqref{nec-cond} easily applies to unions of disjoint
translations of $A$ and $cA^*$.
\end{proof}

\begin{proof}[Proof of Lemma \ref{L-duality}]
Let $A: L^{p}(v\,dx)\to
 L^{q}(\Sp^{n-1})$ be the operator, initially defined for all $f\in C^\infty_0(\R^n)$, by
   $ A f(\omega)= \h{f}(\omega)U^{\frac{1}{q}}(\omega). $ Duality gives
\begin{align*}
\|Af\|_{L^{q}(\Sp^{n-1})} & =
\sup_{\|g\|_{L^{q'}(\Sp^{n-1})}\le 1}\left\vert
\int_{
\Sp^{n-1}
}Af(\omega)g(\omega)\,d\sigma(\omega) \right\vert
\\
&=
\sup_{\|g\|_{L^{q'}(\Sp^{n-1})}\le 1}\left\vert
\int_{
\R^n
}f(x)A^*g(x)\,dx \right\vert,
\end{align*}
where \begin{equation}\label{e2-A*} A^*g(x)=
\int_{\Sp^{n-1}}g(\omega)e^{i\omega x}U^{\frac{1}{q}}(\omega)\,d\sigma(\omega)
.\end{equation}
By H\"{o}lder's inequality
\[
\int_{\R^{n}}f(x)A^{*}g(x)\,dx \le
\|v^{\frac{1}{p}}f\|_{p}\|v^{-\frac{1}{p}}A^{*}g\|_{p'}=\|v^{-\frac{1}{p}}A^{*}g\|_{p'}\|f\|_{L^{p}(v\,dx)}.
\]
Therefore, the inequality
\[
\|v^{-\frac{1}{p}}A^{*}g\|_{p'}=
\left\| \int_{\Sp^{n-1}}g(\omega)e^{i\omega x}U^{\frac{1}{q}}(\omega)d\sigma(\omega)\right\|_{L^{p'}(v^{1-p'}dx)} \!
\leq C\|g\|_{L^{q'}(\Sp^{n-1})}
\]
implies \eqref{2eRestrIneq-w}. A similar argument shows that the inequality
\eqref{2eRestrIneq-w}, or $\|Af\|_{L^{q}(\Sp^{n-1})}\leq
C\|f\|_{L^{p}(v\,dx)},$ implies
\begin{equation}\label{2e-dual-A*}
\|v^{-\frac{1}{p}}A^{*}g\|_{p'} \leq C \|g\|_{L^{q'}(\Sp^{n-1})} .
\end{equation}

\end{proof}

\begin{proof}[Proof of Proposition \ref{P-nec1}]
Let $A$ and $A^*$ be defined as in Lemma~\ref{L-duality}.
Let
$g(\omega)=U^{-\frac{1}{q}}(\omega)$. Clearly, $g\in L^{q'}( \Sp^{n-1})$, and by \eqref{e2-A*}
\[ \|v^{-\frac{1}{p}}A^{*}g\|_{p'}=
A^{*}g(x)=\int_{\Sp^{n-1}}e^{i\omega
x}\,d\sigma(\omega)=\omega_{n-1}j_{n/2-1}(|x|),
\]
(see e.g. \cite{St}).
From \eqref{2e-dual-A*} it follows that
\begin{equation}\label{v-j-cond-1}
\int_{\R^n} v^{1-p'}(x)|j_{n/2-1}(|x|)|^{p'}\,dx\le
C\left(\int_{\Sp^{n-1}}U^{1-q'}(\omega)\,d\sigma(\omega)\right)^{\frac{p'}{q'}}
\end{equation}
as required.
\end{proof}

\begin{proof}[Proof of Corollary \ref{C-nec2}]
Let $ q_{k}=q_{\alpha,k}$, ${k\ge 1}$, be the positive zeros of the Bessel
function $J_{\alpha}(t)$ in nondecreasing order. It is known (see e.g.
\cite{W}) that
\[
J_{\alpha}(t)=C_{\alpha}t^{-1/2}\left(\cos{}(t-c_{\alpha})+O( t ^{-1})\right)
\]
as $t\to +\infty$.
This gives $|j_{\alpha}(t)|\le
C(1+t)^{-\alpha-1/2}$, $t\ge 0$, and
\begin{equation}\label{v-j-cond-2}
|j_{\alpha}(t)|\ge C(1+t)^{-\alpha-1/2},\quad t\in
I:=[0, \infty)- \mathop{\text{\Large$\cup$}}\limits_{k=1}^\infty I'_k
\end{equation}
where $I'_k=(q_k-\varepsilon, q_k+\varepsilon)$ and
$\varepsilon=\varepsilon_{\alpha}>0$ is chosen so that
$ I'_k\cap I'_{l}=\emptyset$ when $k\ne l$.
We let $I:= \cup_{k=0}^\infty I_k$
 and $I_k= [a_k, b_k]$, with $I_{0}=[0,q_{1}-\varepsilon]$ and
$I_{k}=[q_{k}+\varepsilon,q_{k+1}-\varepsilon]$.

It is well known that $q_{k}\sim \pi k$, and there exist constants $c_i>0,$
$i=1,\ldots,4$, that depend only on $\alpha=n/2-1$ so that $c_1\le
q_{k+1}-q_{k}\le c_2$ and, when $k\ne 0$, $c_3\leq
|I_{k}|=q_{k+1}-q_{k}-2\varepsilon\leq c_4$.

Inequalities \eqref{v-j-cond-1} and \eqref{v-j-cond-2} give
\[
\int_{|x|\in I}v^{1-p'}(x)(1+|x|)^{-\frac{p'(n-1)}{2}}\,dx<C.
\]

Furthermore,
\begin{align*}
J&:=\omega_{n-1}^{-1}\int_{\R^{n}}v^{1-p'}(x)(1+|x|)^{-\frac{p'(n-1)}{2}}\,dx\\
&=\int_{0}^{\infty}v_0^{1-p'}(t)(1+t)^{-\frac{p'(n-1)}{2}}t^{n-1}\,dt=
\int_{I_{0}}+\sum_{k=1}^{\infty}\left(\int_{I_{k}}+\int_{I_{k}'}\right).
\end{align*}
Assume that condition \eqref{vs1} holds. Then it is clear that
\[
\int_{I_{k}'} v_{0}^{1-p'}(t)\,dt\le C\int_{I_{k}} v_{0}^{1-p'}(t)\,dt
\]
with some constant $C$. Using this, we get
\begin{align*}
&\int_{I_{k}'}v_{0}^{1-p'}(t)(1+t)^{-\frac{p'(n-1)}{2}}t^{n-1}\,dt
\\
&\qquad \le C
(1+b_{k-1})^{-\frac{p'(n-1)}{2}}a_{k}^{n-1}\int_{I_{k}'}v_{0}^{1-p'}(t)\,dt
\\
&\qquad \le C
(1+b_{k})^{-\frac{p'(n-1)}{2}}\int_{I_{k}}v_{0}^{1-p'}(t)t^{n-1}\,dt
\\
&\qquad \le C
\int_{I_{k}}v_{0}^{1-p'}(t)(1+t)^{-\frac{p'(n-1)}{2}}t^{n-1} \,dt,
\end{align*}
since $b_{k}=b_{k-1}+|I_{k}'|+|I_{k}|\le b_{k-1}+c\le Cb_{k-1}$.

Thus, 
\begin{align*}
J&= \int_{I_{0}}+ \sum_{k=1}^{\infty} \left( \int_{I_{k}}+\int_{I_{k}'} \right)
v_0^{1-p'}(t)(1+t)^{-\frac{p'(n-1)}{2}}t^{n-1}\,dt
\\
& \le C
\sum_{k=0}^{\infty} \int_{I_{k}}
 \le C
\int_{|x|\in I}v^{1-p'}(x)(1+|x|)^{-\frac{p'(n-1)}{2}}\,dx<C.
\end{align*}
If the condition \eqref{vs2} is satisfied, the proof is similar.

\end{proof}

We prove Lemma \ref{2L-restr-gen-CS} to make the paper self-contained.

\begin{proof}[Proof of Lemma \ref{2L-restr-gen-CS}]
Assume $s>p$, since the proof in the other case is similar. Let
$r=\frac{p}{s-p}$. Suppose that $T\colon L^p(d\mu)\to L^q(d\nu)$ is bounded. To
show that $T\colon L^s(wd\mu)\to L^q(d\nu)$ is bounded, we observe that
$\frac{1}{rs}= \frac{s-p}{sp}=\frac{1}{p}-\frac{1}{s}$. By H\"older's
inequality,
\begin{align*}
\|Tf\|_{L^q(d\mu)}&\le C\|f\|_{L^p(d\mu)} = C\|w^{-\frac{1}{s}}w^{\frac{1}{s}}
f\|_{L^p(d\mu)}
\\
&\le C\|w^{-\frac{1}{s}}\|_{L^{rs}(d\mu)}\|w^{\frac{1}{s}} f\|_{L^s(d\mu)}=
C\|w^{-1}\|_{{L^r(d\mu)}}^{\frac{1}{s}}\|w^{\frac{1}{s}} f\|_{L^s(d\mu)},
\end{align*}
as required.

To prove the other direction we argue as \cite{CS} and as in the proof of
Proposition~1.10 in~\cite{BS}. Observe that
\[
\|w^{\frac{1}{s}} f\|_{L^s(d\mu)}^{s} = \int_{\R^n} w|f(x)|^s\,d\mu(x) =
\int_{\R^n} |f(x)|^p\,d\mu(x)
\]
with $w=|f|^{p-s}$. Since
\[
\|w^{-1}\|_{L^r(d\mu)}^{\frac{1}{s}}=\left(\int_{\R^n}
|f(x)|^p\,d\mu(x)\right)^{\frac{s-p}{sp}}=\|f\|_{L^p(d\mu)}^{1-\frac{p}{s}},
\]
we obtain
\begin{align*}
\|Tf\|_{L^q(d\nu)} &\le C\|w^{-1}\|_{L^r(d\mu)}^
{\frac{1}{s}}\|f\|_{L^p(d\mu)}^{\frac{p}{s}}=
C\|f\|_{L^p(d\mu)}^{1-\frac{p}{s}}\|f\|_{L^p(d\mu)}^{\frac{p}{s}} \\ &=
C\|f\|_{L^p(d\mu)}.
\end{align*}
\end{proof}

\begin{proof}[Proof of the Theorem \ref{1T-Restr-Pitt}]
Fix $\rho>0$ and $f\in C^\infty_0(\R^n)$; let $\delta_\rho\psi(x) =\psi(\rho
x)$, and let $g(x)= \rho^{-n}\delta_{\frac{1}{\rho}} f(x)$. We apply
\eqref{1e-wRestrIneq} with $g$ in place of $f$. Recalling that
$\rho^{-n}\h{\delta_{\frac{1}{\rho}} f}= \delta_\rho \h{f}$, we obtain by
Lemma~\ref{2L-restr-gen-CS} and \eqref{1e-wRestrIneq} with $d\nu=U\,d\omega$
and~$d\mu=v\,dx$:
\begin{align*}
\left(\int_{\Sp^{n-1}} |\delta_\rho \h{f}(\omega)|^q U(\omega)
d\sigma(\omega)\right)^{\frac{1}{q}} &=\left(\int_{\Sp^{n-1}} |\widehat
g(\omega)|^q U(\omega)\,d\sigma(\omega)\right)^{\frac{1}{q}} \\ &\le
C\|g\|_{L^p(v\,dx)} = C\rho^{-n}\|v^{\frac{1}{p}}\delta_{\frac{1}{\rho}}f\|_p
\\ &= C\rho^{-n+\frac{n}{p}}\|(\delta_\rho v)^{\frac{1}{p}} f\|_p.
\end{align*}
By our assumptions on $v$ we obtain
\begin{equation}\label{3e-summary}
\int_{\Sp^{n-1}}|\h{f}(\rho\omega)|^q U(\omega)\,d\sigma(\omega)\le C
\rho^{-\frac{nq}{p'}}w^{\frac{q}{p}}(\rho)\|v^{\frac{1}{p}} f\|_p^q.
\end{equation}
We multiply both sides of this inequality by $ u_0(\rho) \rho^{n-1} $ and we
integrate with respect to $\rho$. We obtain
\begin{align*}
&\int_{0}^\infty \rho^{n-1}\int_{\Sp^{n-1}} |\h{f}(\rho\omega)|^q
u_0(\rho)U(\omega)\,d\sigma(\omega)\,d \rho \\ &\qquad\le C\int_0^\infty
\rho^{n-1-\frac{nq}{p'}} u_0(\rho) w^{\frac{q}{p}}(\rho)\,d\rho
\,\|v^{\frac{1}{p}} f\|_p^q
\end{align*}
which by \eqref{1e-Cond-on-C} implies $\int_{\R^n } U(\frac{x}{|x|}) u(x) |\h{f}(x)|^q\,dx
\le C\|v^{\frac{1}{p}} f\|_p^q$.
\end{proof}

\begin{proof}[Proof of Corollary \ref{1T-Rest-Camp}]
When $p=q=2$, we use Theorem \ref{T-RuizV}. The assumptions of Theorem
\ref{1T-Restr-Pitt} are satisfied, and so the following inequality holds:
\begin{equation}\label{eq-R2}
\|\h{f}\|_{L^2(u\,dy)}\le\|f\|_{L^2(V\,dx)}.
\end{equation}

To conclude the proof of Corollary \ref{1T-Rest-Camp} we use a special case of
an interpolation theorem with change of measure proved in \cite{SW}.

\begin{Lemma}\label{L-SW}
Let $Tf$ be a linear operator defined in a space of measurable functions that
include $L^{p_1}(V_1dx)$ and $L^{p_2}(V_2dx)$; assume that
\[
\|Tf\|_{L^{q_1}(u_1\,dy)}\le C\|f\|_{L^{p_1}(V_1\,dx)}
\quad \textup{and}
\quad\|Tf\|_{L^{q_2}(u_2\,dy)}\le C\|f\|_{L^{p_2}(V_2\,dx)}.
\]
Then, for every $0\le t\le 1$,
\begin{equation}\label{e-stW}
\|Tf\|_{L^{q_t }(u_1^tu_2^{1-t}\,dy)}\le C\|f\|_{L^{p _t }(V_1^tV_2^{1-t}\,dx)}
\end{equation}
where $\frac{1}{p_t}=\frac{t}{p_1}+\frac{1-t}{p_2}$ and
$\frac{1}{q_t}=\frac{t}{q_1}+\frac{1-t}{q_2}$.
\end{Lemma}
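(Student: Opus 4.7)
The plan is to prove Lemma~\ref{L-SW} by the classical complex interpolation method of Stein and Weiss, applying Hadamard's three-lines theorem to an analytic family of test functions weighted by powers of $V_i$ and $u_i$. By duality, it suffices to bound the bilinear form
\[
B(f,g)=\int_{\R^n}(Tf)(y)\,g(y)\,dy
\]
for simple $f$ and $g$ with $\|f\|_{L^{p_t}(V_1^tV_2^{1-t}\,dx)}\le 1$ and $g$ of unit norm in the dual of $L^{q_t}(u_1^tu_2^{1-t}\,dy)$.

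On the strip $S=\{0\le\Re z\le 1\}$ I would introduce analytic families of the form
\[
F_z=\mathrm{sgn}(f)\,|f|^{\alpha(z)}V_1^{a(z)}V_2^{b(z)},\qquad
G_z=\mathrm{sgn}(g)\,|g|^{\beta(z)}u_1^{c(z)}u_2^{d(z)},
\]
where $\alpha,\beta,a,b,c,d$ are affine in $z$ with real coefficients chosen so that (a)~$F_t=f$ and $G_t=g$, and (b)~on the right boundary $\Re z=1$ the pointwise identity $|F_{1+i\tau}|^{p_1}V_1=|f|^{p_t}V_1^tV_2^{1-t}$ and the corresponding identity for $G$ hold, with the symmetric identities on the left boundary $\Re z=0$ (indices~$2$ in place of~$1$). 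These identities make the endpoint norms of $F_z,G_z$ independent of $\tau$ and equal, respectively, to the prescribed unit norms of $f$ and $g$.

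Next I would consider $\Phi(z)=B(F_z,G_z)$. Standard arguments (the weights are bounded away from $0$ and $\infty$ after truncation, $f,g$ are simple, and powers $W^{\gamma(z)}$ of positive functions $W$ are holomorphic in $z$) show that $\Phi$ is holomorphic on the interior of $S$ and continuous of admissible growth on $\overline S$. H\"older's inequality combined with the hypothesis $T\colon L^{p_1}(V_1\,dx)\to L^{q_1}(u_1\,dy)$ gives, on $\Re z=1$,
\[
|\Phi(1+i\tau)|\le C\,\|F_{1+i\tau}\|_{L^{p_1}(V_1\,dx)}\|G_{1+i\tau}\|_{L^{q_1'}(u_1^{1-q_1'}\,dy)}\le C,
\]
where the last inequality uses (b). The symmetric bound on $\Re z=0$ follows from the $p_2,q_2$ hypothesis. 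Hadamard's three-lines theorem then yields $|\Phi(t)|\le C$, and since $F_t=f$ and $G_t=g$ we obtain $|B(f,g)|\le C$, whence~\eqref{e-stW} by duality and density of simple functions.

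The main obstacle is the bookkeeping of the six affine exponents so that (a) and (b) are compatible simultaneously; once one notes that on a vertical line $\Re z=s$ the modulus $|F_z|$ reduces to $|f|^{\alpha(s)}V_1^{a(s)}V_2^{b(s)}$ (independent of the imaginary part), the conditions determine $\alpha,a,b$ as the affine interpolants of their prescribed values at $s=0,1$. A routine truncation (replace $V_i,u_i$ by $\max(\varepsilon,\min(V_i,\varepsilon^{-1}))$ and pass $\varepsilon\to 0$) then handles weights that vanish or blow up. The rest is the standard Stein--Weiss interpolation-with-change-of-measure machinery as in~\cite{SW}.
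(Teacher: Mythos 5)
Your overall strategy (dualize, build an analytic family, apply the three-lines theorem) is exactly the Stein--Weiss method; in that sense your attempt does more than the paper, which gives no proof of Lemma~\ref{L-SW} at all and simply quotes \cite{SW}. The genuine problem is the step you set aside as ``bookkeeping'': conditions (a) and (b) are incompatible whenever $p_1\neq p_2$ (and likewise for the $G$-family when $q_1\neq q_2$). Indeed, since $|F_{s+i\tau}|=|f|^{\alpha(s)}V_1^{a(s)}V_2^{b(s)}$, your boundary identities force $a(1)=(t-1)/p_1$ and $a(0)=t/p_2$, so the affine interpolant satisfies
\[
a(t)=(1-t)\frac{t}{p_2}+t\,\frac{t-1}{p_1}=t(1-t)\Bigl(\frac{1}{p_2}-\frac{1}{p_1}\Bigr),
\]
which is nonzero for $0<t<1$ when $p_1\neq p_2$; similarly $b(t)=-a(t)$, so $F_t=f\,(V_1/V_2)^{a(t)}\neq f$ and condition (a) fails. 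In the very application the paper makes of the lemma ($p_1=q_1=2$, $p_2=1$, $q_2=\infty$) this obstruction is present, so it cannot be dismissed as routine.

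The failure is not an artifact of your parametrization. With the weights-as-densities convention used throughout the paper, the three-lines argument (and the theorem actually proved in \cite{SW}) yields
\[
\|Tf\|_{L^{q_t}\bigl(u_1^{q_t t/q_1}u_2^{q_t(1-t)/q_2}\,dy\bigr)}\le C\,\|f\|_{L^{p_t}\bigl(V_1^{p_t t/p_1}V_2^{p_t(1-t)/p_2}\,dx\bigr)},
\]
and the plain geometric means in \eqref{e-stW} appear only when $p_1=p_2=p_t$ and $q_1=q_2=q_t$, or if one interpolates the multipliers $V_i^{1/p_i}$, $u_i^{1/q_i}$ rather than the densities (the intermediate multiplier is $(V_1^{1/p_1})^t(V_2^{1/p_2})^{1-t}$). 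As literally stated, \eqref{e-stW} is false in general: interpolating the $L^2$ Pitt inequality $\|\h{f}\|_{L^2(|\xi|^{-a}d\xi)}\le C\|f\|_{L^2(|x|^a dx)}$ with Hausdorff--Young, the geometric-mean conclusion $\|\h{f}\|_{L^{p'}(|\xi|^{-at}d\xi)}\le C\|f\|_{L^{p}(|x|^{at}dx)}$ fails a dilation test unless $a=0$ or $p=2$, whereas the displayed conclusion is dilation-consistent. So the gap is real: carried out correctly, your construction proves the Stein--Weiss conclusion with the exponent-weighted powers above, not \eqref{e-stW}; completing a proof of the lemma as stated is impossible without either changing the intermediate weights in the statement (with the corresponding adjustment in the proof of Corollary~\ref{1T-Rest-Camp}) or reinterpreting the weights as multipliers.
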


We apply Lemma \ref{L-SW} with $Tf=\h{f}$; we interpolate the inequality
\eqref{eq-R2} and the $\|\h{f}\|_\infty\le\|f\|_1$; we let $u=u_1$ and $V=V_1$,
and $u_2= V_2= 1$; we let $
\frac{1}{p_t}=\frac{t}{2}+1-t=1-\frac{t}{2}$, so that $t=
2\left(1-\frac{1}{p_t}\right)=\frac{2}{p_t'}$. Note that $ q_t=p_t'$. By
\eqref{e-stW}, we have
\[
\|\h{f}\|_{L^{p'}(u^{\frac{2}{p'}}dy)}\le\|f\|_{L^{p}(V^{\frac{2}{p'}}dx)}
\]
where we have let $p=p_t$ for simplicity. That concludes the proof of the
corollary.
\end{proof}

\section{Applications to the uncertainty principle}

The uncertainty principle is a cornerstone in quantum physics and in Fourier
Analysis. The simplest formulation of the uncertainty principle in harmonic
analysis is \textit{Heisenberg's inequality}, which applies to functions in
$L^2(\R^n)$ of norm $=1$. It states that the product of the variances of $f$
and $\h{f}$ is bounded above by a universal constant, i.e.
\[
\inf_{a \in\R^n}\int_{\R^n} |x-a|^2 |f(x)|^2\,dx\,
\inf_{b \in\R^n}\int_{\R^n} |\xi-b|^2 |\h{f}(\xi)|^2\,d\xi\ge \frac{(2\pi)^n n^2}{4}.
\]
One of the many consequences of this inequality is that a nonzero function and
its Fourier transform cannot both be compactly supported.

The uncertainty principle for $L^p$ functions is also interesting. Inequalities
in the form of $\|f\|_2^2\leq
C\|v^{\frac{1}{p}}f\|_p\|w^{\frac{1}{q}}\h{f}\|_q$, where $v$ and $w$ are
suitable weight functions and $1\le p,\ q\le \infty$ are discussed in
\cite{CP}. Power weights are of particular interest:
using a standard homogeneity argument, is easy to prove that a necessary
condition for the inequality {$||f||_2^2\leq C \||x|^a f\|_p\||\xi|^b\h{f}\|_q\ $}
to hold for all $f\in C^\infty_0(\R^n)$ is that $a+\frac{n}{p}=b+\frac{n}{q}$. %
See also \cite{FS} for a survey on uncertainty principle.

\medskip
We prove the following
\begin{Thm}\label{Uncertainty}
Let $u,\ v$ be weights for which the Pitt inequality \eqref{1e-Genpitt} holds
for some $1\le p, q\le \infty$. Then, for every $f\in C^\infty_0(\R^n)$,
\[
\|f\|_2^2\le C\big\|u^{-\frac{1}{q}}|\xi|\h{f}\big\|_{q'}\big\|v^{\frac{1}{p}}|x|f\big\|_p,
\]
where $C$ is independent of $f$.
\end{Thm}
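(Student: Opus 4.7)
The plan is to reduce the weighted uncertainty principle to Pitt's inequality by running the classical Heisenberg divergence argument on the Fourier side rather than on $f$ itself. Since $f\in C_0^\infty(\R^n)$, $\h f$ is a Schwartz function, and Plancherel gives $\|f\|_2^2=(2\pi)^{-n}\|\h f\|_2^2$, so it suffices to estimate $\|\h f\|_2^2$. Integrating the pointwise identity $\operatorname{div}_\xi(\xi|\h f|^2)=n|\h f|^2+2\Re(\overline{\h f}\,\xi\cdot\nabla_\xi\h f)$ over $\R^n$ and discarding the boundary term (justified by the Schwartz decay of $\h f$) yields
\[
n\|\h f\|_2^2=-2\Re\int_{\R^n}(\xi\cdot\nabla_\xi\h f)\overline{\h f}\,d\xi.
\]

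Next, the standard identity $\partial_{\xi_j}\h f(\xi)=i\widehat{x_j f}(\xi)$, i.e.\ $\nabla_\xi\h f=i\widehat{xf}$ componentwise, converts the gradient into the Fourier transform of the vector $xf=(x_1f,\ldots,x_nf)$. Combining the Cauchy--Schwarz bound $|\xi\cdot\widehat{xf}|\le|\xi||\widehat{xf}|$ with Hölder's inequality for the conjugate exponents $(q',q)$, inserting the factor $u^{-1/q}\cdot u^{1/q}$, produces
\[
n\|\h f\|_2^2\le 2\int_{\R^n}|\xi||\widehat{xf}||\h f|\,d\xi\le 2\bigl\|u^{-1/q}|\xi|\h f\bigr\|_{q'}\bigl\|u^{1/q}|\widehat{xf}|\bigr\|_{q}.
\]

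Finally, since each scalar component $x_jf$ still lies in $C_0^\infty(\R^n)$, Pitt's inequality \eqref{1e-Genpitt} applies and gives $\|u^{1/q}\widehat{x_jf}\|_q\le C\|v^{1/p}x_jf\|_p$. Combining this with the pointwise bound $|\widehat{xf}|\le\sum_j|\widehat{x_jf}|$, Minkowski in $L^q$, and $|x_j|\le|x|$ yields $\|u^{1/q}|\widehat{xf}|\|_q\le C'\|v^{1/p}|x|f\|_p$. Plugging this into the previous display and returning to $\|f\|_2^2$ through Plancherel completes the proof. The only technical point is the vanishing of the boundary term in the divergence identity, which is immediate from the Schwartz decay of $\h f$; there is no substantial obstacle, and the argument shows in particular that the constant depends only on $n$, the Pitt constant in \eqref{1e-Genpitt}, and the $(2\pi)^n$ normalization in Plancherel.
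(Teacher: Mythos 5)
Your proof is correct and follows essentially the same strategy as the paper: a Heisenberg-type integration by parts identity, H\"older's inequality after inserting the factor $u^{-\frac{1}{q}}u^{\frac{1}{q}}$, and the Pitt hypothesis \eqref{1e-Genpitt} applied to $x_jf\in C^\infty_0(\R^n)$. The only cosmetic difference is that you run the full $n$-dimensional divergence identity on the Fourier side (picking up a harmless factor $n$ and the vector $\widehat{xf}$, handled componentwise), whereas the paper integrates by parts in the single variable $x_1$ on the physical side and then passes to the Fourier side via Parseval; both routes give the stated inequality with the same dependence of the constant.
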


\medskip

\begin{Cor}\label{C-UncPrin}
  Let $1\le p < \frac{2(n+2)}{n+4}$ and $1\le q\le
\frac{n-1}{n+1}\,p'$.
Let $s (x)= s_0(|x|)$ be a radial weight that satisfies
\begin{equation}\label{int-fin}
\int_{0}^\infty \frac{\rho^{n-1-\frac{qn}{p'} }}{s_0 (\rho)}\,d\rho <\infty.
\end{equation}
Then,
\begin{equation}\label{ineq3}
\|f\|_2^2\le C\big\|s_0^{\frac 1q}(|\xi|)|\xi|\ \h{f}\big\|_{q'}\big\||x|f\big\|_p, \quad
f\in C^\infty_0(\R^n).
\end{equation}

\end{Cor}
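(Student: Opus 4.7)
The plan is to deduce the corollary by combining Theorem~\ref{new} with Theorem~\ref{Uncertainty}, matching the weights appropriately. Concretely, I would set
\[
v(x)\equiv 1,\qquad u(x)=u_{0}(|x|):=\frac{1}{s_{0}(|x|)},
\]
so that, formally, $u^{-1/q}(\xi)=s_{0}^{1/q}(|\xi|)$ and $v^{1/p}(x)\equiv 1$. With these choices, the right-hand side of the inequality in Theorem~\ref{Uncertainty} becomes exactly $C\bigl\|s_{0}^{1/q}(|\xi|)|\xi|\h{f}\bigr\|_{q'}\bigl\||x|f\bigr\|_{p}$, which is the right-hand side of~\eqref{ineq3}.

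Next, I would verify that this pair $(u,v)$ satisfies the Pitt inequality \eqref{1e-Genpitt} for the prescribed range of exponents. Since $v\equiv 1$, this reduces to
\[
\|u^{1/q}\h{f}\|_{q}\le C\|f\|_{p},
\]
which is exactly the inequality \eqref{1e-NewPitt} provided by Theorem~\ref{new}. The hypotheses on $p$ and $q$ in Corollary~\ref{C-UncPrin} coincide with those of Theorem~\ref{new}, and the hypothesis \eqref{1e-cond-nu} with $u_{0}=1/s_{0}$ reads
\[
\int_{0}^{\infty}\rho^{n-1-\frac{qn}{p'}}u_{0}(\rho)\,d\rho=\int_{0}^{\infty}\frac{\rho^{n-1-\frac{qn}{p'}}}{s_{0}(\rho)}\,d\rho<\infty,
\]
which is precisely the assumption \eqref{int-fin}. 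Hence Theorem~\ref{new} is applicable and yields the Pitt inequality for the chosen $(u,v)$.

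Finally, with the Pitt inequality in hand, I would invoke Theorem~\ref{Uncertainty} for this pair $(u,v)$ and the same exponents $p,q$, obtaining
\[
\|f\|_{2}^{2}\le C\bigl\|u^{-1/q}|\xi|\h{f}\bigr\|_{q'}\bigl\|v^{1/p}|x|f\bigr\|_{p}=C\bigl\|s_{0}^{1/q}(|\xi|)|\xi|\h{f}\bigr\|_{q'}\bigl\||x|f\bigr\|_{p},
\]
for every $f\in C_{0}^{\infty}(\R^{n})$, which is \eqref{ineq3}. There is no substantial obstacle here: the argument is a direct assembly of two earlier results once the correct identification $u_{0}=1/s_{0}$, $v\equiv 1$ is made; the only thing to check is the algebraic matching of the hypothesis \eqref{int-fin} with \eqref{1e-cond-nu}, which is immediate.
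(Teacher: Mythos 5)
Your proposal is correct and follows exactly the paper's own argument: the paper also proves Corollary~\ref{C-UncPrin} by applying Theorem~\ref{Uncertainty} to the Pitt inequality from Theorem~\ref{new} with $v\equiv 1$ and $u_0(\rho)=s_0^{-1}(\rho)$, so that \eqref{int-fin} is just \eqref{1e-cond-nu} for this choice. Your write-up simply spells out the verification in more detail than the paper does.
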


For example, $s_0(\rho)= \rho^{-m} (1+\rho)^{m+n- \frac{nq}{p'} +\varepsilon}$,
with $\varepsilon>0,$ and $m +n -\frac{qn}{p'}>0$, satisfies (\ref{int-fin}).

\medskip

\begin{Cor}\label{C-UncPrin2}
Let
\[
v (x)=\begin{cases} |x|^{\alpha }, & |x|\le 1,\\ |x|^{\beta }, & |x|>1,\end{cases}\quad
\text{and}\quad w_0 (\rho)= \max\{\rho^\alpha,\ \rho^\beta\},
\]
Let $1 < p\le 2$, $2\le q\le \frac{n-1}{n+1}\,p'$,
 $\alpha <n(p-1)$, and $\beta \ge 0$. We have
\begin{equation}\label{ineq4}
\|f\|_2^2\le C\big\|s_0^{\frac 1q}(|\xi|)|\xi| \ \h{f}\big\|_{q'}\big\|\,|x| v^{\frac{1}{p}}f\big\|_p,\quad
f\in C^\infty_0(\R^n).
\end{equation}
provided
\begin{equation}\label{int-fin2}
\int_{0}^\infty \frac{\rho^{n-1-\frac{qn}{p'} }w_0 ^{\frac qp}(\rho)}{s_0 (\rho)}\,d\rho <\infty.
\end{equation}
When $\alpha<n$ and $\beta>1$, we have
\begin{equation}\label{ineq5}
\|f\|_2^2\le C\big\|s_0^{\frac{1}{2}}(|\xi|)|\xi| \h{f}\big\|_{2} \big\|\, |x| v^{\frac{1}{2}} f\big\|_2,\quad
f\in C^\infty_0(\R^n),
\end{equation}
provided
\[
\int_{0}^\infty \frac{ w_0 (\rho)}{\rho\,s_0 (\rho)}\,d\rho <\infty.
\]
\end{Cor}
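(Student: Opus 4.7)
The plan is to derive both inequalities as direct consequences of Theorem \ref{Uncertainty} combined with Corollary \ref{1T-Rest-Bloom} (and its extension via Theorem \ref{2T-Bloom}). The key observation is that on the right-hand side of Theorem \ref{Uncertainty} the Fourier-side weight appears as $u^{-1/q}$, so to produce the weight $s_0^{1/q}(|\xi|)$ appearing in \eqref{ineq4} and \eqref{ineq5} we should choose the radial weight $u(\xi) = u_0(|\xi|)$ with $u_0(\rho) = 1/s_0(\rho)$.

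For \eqref{ineq4}, I would first verify the Pitt inequality \eqref{1e-Pitt-Bloom} for this $u$ and for $v$ the piecewise power weight in the statement. The exponent ranges $1<p\le 2$, $2\le q\le \tfrac{n-1}{n+1}p'$ and the conditions $\alpha<n(p-1)$, $\beta\ge 0$ are precisely those of Corollary \ref{1T-Rest-Bloom}, and the remaining hypothesis
\[
\int_{0}^{\infty}\rho^{n-1-\frac{qn}{p'}}u_0(\rho)w_0^{q/p}(\rho)\,d\rho<\infty
\]
becomes, after substituting $u_0=1/s_0$, exactly the integrability condition \eqref{int-fin2}. With the Pitt inequality in hand, Theorem \ref{Uncertainty} applied with this $(u,v)$ yields \eqref{ineq4}, since $u^{-1/q}(|\xi|)=s_0^{1/q}(|\xi|)$.

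For \eqref{ineq5} I would repeat the same argument at $p=q=2$, but invoke the second half of Theorem \ref{2T-Bloom} (admissible by the Remark right after Corollary \ref{1T-Rest-Bloom}, which says the corollary is valid for all exponents and piecewise power weights for which the restriction theorems of \cite{BS} hold). This allows the wider range $\alpha<n$, $\beta>1$ at the cost of fixing $p=q=2$. In that case the exponent $n-1-\tfrac{qn}{p'}$ reduces to $-1$, so the integral condition of Corollary \ref{1T-Rest-Bloom} collapses to
\[
\int_{0}^{\infty}\frac{w_0(\rho)}{\rho\,s_0(\rho)}\,d\rho<\infty,
\]
which is exactly the hypothesis stated for \eqref{ineq5}. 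Theorem \ref{Uncertainty} then concludes the argument.

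There is no real obstacle here; the proof is a bookkeeping exercise that matches the two halves of the hypothesis (on $\alpha,\beta,p,q$ and on the decay of $s_0$) to the two halves of the Pitt inequality input (from the appropriate part of Theorem \ref{2T-Bloom} and from the growth condition \eqref{1e-Cond-on-C} of Theorem \ref{1T-Restr-Pitt}). The only mild care needed is to ensure that the weight $u=1/s_0(|\cdot|)$ fits into the radial framework of Corollary \ref{1T-Rest-Bloom}, which is automatic from the radial hypothesis on $s_0$.
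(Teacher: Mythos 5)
Your proposal is correct and follows essentially the same route as the paper, whose proof is exactly the one-line reduction to Corollary \ref{1T-Rest-Bloom} and Theorem \ref{Uncertainty} with $u_0(\rho)=s_0^{-1}(\rho)$. Your additional remark that \eqref{ineq5} requires the $p=q=2$ part of Theorem \ref{2T-Bloom} (via the Remark after Corollary \ref{1T-Rest-Bloom}) simply makes explicit what the paper leaves implicit.
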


\begin{proof}[Proof of Theorem \ref{Uncertainty}]
We use the same idea of the proof of the $L^2$ Heisenberg principle (see
\cite{FS}). Let $f\in C^\infty_0(\R^n)$. We denote $x=(x_1, \dots ,
x_n)\in\R^n$ by $(x_1, x')$, with $x'\in\R^{n-1}$. We integrate by parts the
function $ |f(x)| ^2 = |f(x_1, x')|^2$ with respect to $x_1$. That is,
\[
\int_{-\infty}^\infty |f(x_1, x')|^2\,dx_1 = x_1|f(x_1, x')|^2 \bigr|_{x_1=-\infty}^\infty -
\int_{-\infty}^\infty x_1 \frac{\partial\,|f(x_1, x')|^2}{\partial x_1}\,dx_1.
\]

A simple calculation shows that
\[
\frac{\partial\,|f(x_1, x')|^2}{\partial x_1}=
\frac{\partial}{\partial x_1}\left(f(x_1, x')\overline{f(x_1, x')}\right)=
2\Re\left(\overline{f(x_1, x')}\,\frac{\partial f(x_1, x')}{\partial x_1}\right).
\]
We obtain
\[
\int_{-\infty}^\infty |f(x_1, x')|^2\,dx_1=
-2\Re\int_{-\infty}^\infty x_1\overline{f(x_1, x')}\,\frac{\partial f(x_1, x')}{\partial
x_1}\,dx_1.
\]
We integrate the above identity in $x'$, to obtain
\[
\|f\|_2^2=-2\Re\int_{\R^n}x_1\overline{f(x)}\,\frac{\partial f(x)}{\partial x_1}\,dx.
\]
We use the identity
$\int_{\R^{n}}f_{1}\overline{f_{2}}\,dx= (2\pi)^{-n} \int_{\R^{n}}\h{f}_{1}\,\overline{\h{f}_{2}}\,d\xi$,
and we recall that the Fourier transform of $\frac{\partial f(x)}{\partial
x_1}$ is $-i\xi_1\h{f}(\xi)$. Thus,
\begin{align*}
\|f\|_2^2 &= 2(2\pi)^{-n}\Re \left( i
\int_{\R^n}\xi_1\h{f}(\xi)\,\overline{(\h{x_1f})(\xi)}\,d\xi\right) \\ & =
2(2\pi)^{-n}\Re \left( i \int_{\R^n}(u^{-\frac{1}{q}}\xi_1
\h{f}(\xi))\overline{(\,u^{\frac{1}{q}} \, (\h{x_1f}) (\xi)} \,d\xi \right)
\end{align*}
and by H\"older inequality and Theorem \ref{1T-Restr-Pitt},
\begin{align*}
\|f\|_2^2 &\le C\,\|u^{-\frac{1}{q}} \xi_1\h{f}\|_{q'}\|u^{\frac{1}{q}}
\h{x_1f}\|_q
\\
&\le C\|u^{-\frac{1}{q}} \xi_1\h{f}\|_{q'}\|v^{\frac{1}{p}} x_1 f\|_p
\\
&\le C\|u^{-\frac{1}{q}} |\xi|\h{f}\|_{q'}\|v^{\frac{1}{p}} |x| f\|_p
\end{align*}
as required.
\end{proof}

\begin{proof} [Proof of Corollary \ref{C-UncPrin}]
Follows from Theorems \ref{new} and \ref{Uncertainty}, with $v\equiv 1$ and
$u_0(\rho)= s_{0}^{-1}(\rho)$.
\end{proof}

\begin{proof}[Proof of Corollary \ref{C-UncPrin2}]
follows from Corollary \ref{1T-Rest-Bloom} and Theorem \ref{Uncertainty}, with
$u_0(\rho)= s_{0}^{-1}(\rho)$.
\end{proof}

\section{Riemann--Lebesgue estimates via Pitt inequalities}

Here we investigate the interrelation between the smoothness of a function and
the growth properties of the Fourier transforms. The original result goes back
to the Riemann--Lebesgue estimate $|\widehat{f}(\xi)|\to 0$ as $|\xi|\to
\infty$, where $f\in L^1(\mathbb{R}^n)$ and its quantitative version given by
\begin{equation}\label{fc}
|\widehat{f}(\xi)|\le C \omega_l\left(f,\frac{1}{|\xi|}\right)_1, \quad f\in L^1(\R^n),
\end{equation}
where the modulus of smoothness $\omega_l(f,\delta)_p$ of a function $f\in L^{p}(X)$
is defined by
\begin{equation}\label{mod}
\omega_l\left(f,\delta\right)_{p} = \sup_{|h|\le\delta}
\left\| \Delta^l_h f (x) \right\|_{L^p(\R^n)},\quad 1\le p \le \infty,
\end{equation}
and
\[
\Delta^l_h f (x) = \Delta^{l-1}_h\left(\Delta_h f (x)\right),
\qquad \Delta_h f (x)=f(x+h)-f(x).
\]
Recently this result was extended for $L^p$-functions. Let us first define the
suitable multivariate substitution for the classical modulus of smoothness.

For a locally integrable function~$f$ the average on a sphere in $\R^{n}$ of
radius $t>0$ is given~by
\[
V_t f(x):=\frac{1}{m_t}\int_{|y-x|=t} f(y)\,dy\quad \textup{with}\quad V_t
1=1,\quad n\ge 2.
\]
For $l\in \N$ we define
\[
V_{l,t}f (x):=\frac{-2}{\binom{2l}{l}} \sum_{j=1}^l (-1)^j
\binom{2l}{l-j} V_{jt} f(x).
\]
and set
\[
\Omega_{l}(f,t)_{p}=\|f - V_{l,t} f\|_p.
\]
In \cite[Th.~2.1 (A), $n\ge 2$]{GorTik12} the following Riemann--Lebesgue type
estimates was proved.

\begin{Thm}\label{GorTik12-thm}
Let $f\in L^{p}(\R^{n})$, $1<p\le 2$. Then for $p\le q\le p'$ we have
$|\xi|^{n(1-\frac 1 p-\frac 1 q)}\h{f}(\xi)\in L^{q}(\R^{n})$, and
\[
\left(\int_{\R^n}\left[\min{}(1,t|\xi|)^{2 l} |\xi|^{n(1-\frac 1 p-\frac 1 q)}
|\h{f}(\xi)|\right]^{q}\,d\xi\right)^{\frac{1}{q}}\le C\Omega_l(f,t)_p.
\]
\end{Thm}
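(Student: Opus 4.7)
The plan is to reduce the estimate to the classical power-weight Pitt inequality applied to $g := f - V_{l,t}f$, whose $L^p(\R^n)$-norm equals $\Omega_l(f,t)_p$, and then to recover the desired bound on $\h{f}$ via a pointwise lower bound on the Fourier multiplier of the operator $I - V_{l,t}$.

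I would first identify the multiplier of $V_{l,t}$. Since $\widehat{V_t f}(\xi) = j_{n/2-1}(t|\xi|)\,\h{f}(\xi)$, linearity gives
\[
\widehat{(f - V_{l,t}f)}(\xi) = \bigl(1 - \Phi_l(t|\xi|)\bigr)\h{f}(\xi), \qquad
\Phi_l(s) := \frac{-2}{\binom{2l}{l}}\sum_{j=1}^{l} (-1)^{j} \binom{2l}{l-j}\,j_{n/2-1}(js).
\]

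The core of the argument, and its main obstacle, is the pointwise estimate $\min(1, s)^{2l} \le C\,|1 - \Phi_l(s)|$ for all $s \ge 0$. For $s$ near $0$ I would Taylor-expand $j_{n/2-1}(js) = \sum_{k\ge 0}\gamma_{k}(js)^{2k}$ and invoke the combinatorial identities
\[
\sum_{j=1}^{l}(-1)^{j}\binom{2l}{l-j} = -\tfrac12\binom{2l}{l}, \qquad
\sum_{j=1}^{l}(-1)^{j}\binom{2l}{l-j}\,j^{2k} = 0 \quad \text{for } 1\le k\le l-1,
\]
which follow from the fact that the $2l$-th finite difference annihilates polynomials of degree below $2l$. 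Combined with nonvanishing of the analogous sum at $k = l$, these yield $\Phi_l(0) = 1$ and $1 - \Phi_l(s) = c_{l,n}\,s^{2l} + O(s^{2l+2})$ with $c_{l,n}\ne 0$, settling the regime $s\to 0^+$. For large $s$, Bessel decay $|j_{n/2-1}(js)| = O(s^{-(n-1)/2})$ forces $|1 - \Phi_l(s)|\to 1$. The genuinely delicate point, and the main obstacle, is ruling out zeros of $1 - \Phi_l$ on compact subintervals of $(0,\infty)$; this rests on the specific combinatorial choice of coefficients in $V_{l,t}$ and can be established via the positivity of a kernel representation of $I - V_{l,t}$.

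Finally I would apply the classical power-weight Pitt inequality \eqref{1e-pitt} to $g = f - V_{l,t}f$ with $v \equiv 1$ (so $a = 0$) and $u(\xi) = |\xi|^{b}$, $b = qn(1 - 1/p - 1/q)$. The balance \eqref{1e-relation-ab} holds by construction, and \eqref{1e-relation-b} reduces to $-n < b \le 0$: the upper bound is equivalent to $q \le p'$, the lower to $p > 1$, so the admissible range is precisely $p \le q \le p'$. This produces
\[
\left(\int_{\R^n}|\xi|^{qn(1 - 1/p - 1/q)}\,|\widehat{(f - V_{l,t}f)}(\xi)|^{q}\,d\xi\right)^{1/q}\le C\,\Omega_l(f, t)_p.
\]
Substituting $s = t|\xi|$ into the multiplier bound yields $\min(1, t|\xi|)^{2l}\,|\h{f}(\xi)|\le C\,|\widehat{(f - V_{l,t}f)}(\xi)|$ pointwise; inserting this into the Pitt estimate gives the theorem, and a density argument extends the conclusion from $C^\infty_0(\R^n)$ to $L^p(\R^n)$.
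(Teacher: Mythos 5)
The paper does not actually prove Theorem \ref{GorTik12-thm}: it is quoted from \cite{GorTik12}, with only the remark that the essential step is Pitt's inequality \eqref{1e-pitt} with unweighted right-hand side (i.e.\ $a=0$, $b=qn(1-\frac1p-\frac1q)$, admissible exactly for $p\le q\le p'$) --- which is precisely the route you take, and your argument is essentially a correct reconstruction of the proof in \cite{GorTik12}. The one step you leave schematic, the absence of zeros of $1-\Phi_l$ on $(0,\infty)$, is exactly the key lemma there; for $n\ge 2$ it follows from the subordination formula $j_{n/2-1}(s)=c_n\int_0^1(1-u^2)^{(n-3)/2}\cos(su)\,du$ together with the expansion $(2\sin x)^{2l}=\binom{2l}{l}+2\sum_{j=1}^l(-1)^j\binom{2l}{l-j}\cos(2jx)$, which give $1-\Phi_l(s)=c_n\binom{2l}{l}^{-1}\int_0^1(1-u^2)^{(n-3)/2}\bigl(2\sin(su/2)\bigr)^{2l}\,du\ge 0$ and, combined with your endpoint asymptotics, the two-sided bound $1-\Phi_l(s)\asymp\min(1,s)^{2l}$. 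Note also that the membership claim $|\xi|^{n(1-\frac1p-\frac1q)}\h{f}\in L^q(\R^n)$ follows from the same Pitt inequality applied to $f$ itself (extended from $C_0^\infty$ to $L^p$ by the density argument you invoke).
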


Note that some partial cases were previously proved in \cite{BP, Di}; see
also~\cite{Br1}. The essential step in the proof of Theorem \ref{GorTik12-thm}
is the use of Pitt's inequalities \eqref{1e-pitt} under conditions
\eqref{1e-relation-ab} and \eqref{1e-relation-b} in the case when $b=0$, that
is when the right-hand side of \eqref{1e-pitt} is the non-weighted $L^p$-norm.

Here we refine Theorem \ref{GorTik12-thm} using new Pitt's inequality given by
Theorem~\ref{new}.

\begin{Thm}
Under the assumption of Theorem~\textup{\ref{new}}, we have
\[
\left(\int_{\R^n}\left[\min{}(1, \ t|\xi|)^{2 l}
|\h{f}(\xi)|\right]^{q}u(\xi)\,d\xi\right)^{\frac 1 q}\le C\Omega_l(f,t)_p.
\]
\end{Thm}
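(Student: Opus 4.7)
The plan is to imitate the proof of Theorem~\ref{GorTik12-thm} in~\cite{GorTik12}, replacing the classical Pitt inequality there by the new Pitt inequality of Theorem~\ref{new}. The scheme is: compute the Fourier symbol of $V_{l,t}$, establish a pointwise multiplier estimate $\min(1,s)^{2l}\le C|1-M_l(s)|$, and apply Theorem~\ref{new} to the ``error'' $g:=f-V_{l,t}f$, whose $L^p$ norm equals $\Omega_l(f,t)_p$ by definition.

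For the Fourier symbol, the identity $\omega_{n-1}^{-1}\int_{\Sp^{n-1}}e^{i\omega\eta}\,d\sigma(\omega)=j_{n/2-1}(|\eta|)$ yields $\h{V_tf}(\xi)=j_{n/2-1}(t|\xi|)\h{f}(\xi)$, and by linearity
\[
\h{V_{l,t}f}(\xi)=M_l(t|\xi|)\h{f}(\xi),\qquad M_l(s):=\frac{-2}{\binom{2l}{l}}\sum_{j=1}^l(-1)^j\binom{2l}{l-j}j_{n/2-1}(js),
\]
so that $\h{f-V_{l,t}f}(\xi)=(1-M_l(t|\xi|))\h{f}(\xi)$. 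The key technical lemma is then the pointwise multiplier estimate
\[
\min(1,s)^{2l}\le C\,|1-M_l(s)|,\qquad s\ge 0.
\]
Taylor-expanding each $j_{n/2-1}(js)$ near $s=0$ and using that the combinatorial weights in $V_{l,t}$ force $M_l(s)=1+O(s^{2l})$ gives $1-M_l(s)\sim \lambda_l s^{2l}$ as $s\to 0$; the Bessel decay $j_{n/2-1}(s)=O(s^{-(n-1)/2})$ gives $|1-M_l(s)|\to 1$ as $s\to\infty$; and continuity combined with non-vanishing of $1-M_l$ on $(0,\infty)$ handles the intermediate range. This multiplier bound is established in~\cite{GorTik12}.

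With both ingredients in place, Theorem~\ref{new} applied to $g:=f-V_{l,t}f$ (extended from $C_0^\infty(\R^n)$ to $L^p(\R^n)$ by density, noting that the spherical averages $V_t$ are $L^p$-contractions, so $g\in L^p$ whenever $f\in L^p$) yields
\[
\left\|u^{1/q}\min(1,t|\cdot|)^{2l}\h{f}\right\|_q\le C\left\|u^{1/q}(1-M_l(t|\cdot|))\h{f}\right\|_q=C\|u^{1/q}\h{g}\|_q\le C\|g\|_p=C\,\Omega_l(f,t)_p,
\]
which is the claimed inequality. The main obstacle is the pointwise multiplier estimate, and more precisely the uniform non-vanishing of $1-M_l$ on $(0,\infty)$; the other pieces (symbol computation, density extension of Theorem~\ref{new}, and the chain of inequalities) are routine.
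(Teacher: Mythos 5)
Your proposal is correct and follows essentially the same route as the paper, whose proof is exactly the prescription you carry out: repeat the argument of Theorem~\ref{GorTik12-thm} from \cite{GorTik12} (symbol computation for $V_{l,t}$ and the multiplier bound $\min(1,s)^{2l}\le C|1-M_l(s)|$, which you rightly delegate to \cite{GorTik12} since it is unchanged), replacing the power weight $|\xi|^{n(1-\frac1p-\frac1q)}$ and the classical Pitt inequality by $u^{\frac1q}$ and Theorem~\ref{new} applied to $g=f-V_{l,t}f$.
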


The proof repeats the proof of Theorem~\ref{GorTik12-thm} with the only
modification that one should use the weight $u^{\frac{1}{q}}(\xi)$ in place of
$|\xi|^{n(1-\frac 1 p-\frac 1 q)}$ (see ~\cite[(2.16)]{GorTik12}) and
Theorem~\textup{\ref{new}}.

\section{Other applications}
Inequality \eqref{1e-cond-nu} in Theorem \ref{new} implies $\int_{\R^n} u(\xi)
(1+|\xi|)^{-\frac{qn}{p'}}d\xi <\infty$. In \cite{Bu} it is proved that if
\eqref{1e-Genpitt} holds for $1<p\le q<\infty$, and if
$\int_{\R^n} u(\xi)^{1-q'}(1+|\xi|)^{-M}d\xi <\infty$ for some $M>0$, then once
can prove a Bernstein-type theorem, which characterizes the Fourier transform
on weighted Besov spaces. We leave the generalization of the main Theorem in
\cite{Bu} to the interested reader.

\end{document}